\theoremstyle{plain}
\newtheorem{theorem}{Theorem}
\newtheorem{proposition}{Proposition}
\newtheorem{lemma}{Lemma}
\theoremstyle{definition}
\newtheorem{example}{Example}
\newtheorem{remark}{Remark}
\newcommand{\enm}[1]{\ensuremath{#1}}          %
\newcommand{\cal}[1]{\mathcal{#1}}
\newcommand{\Nm}{\mathrm{Num}}
\newcommand{\CC}{\enm{\mathbb{C}}}
\newcommand{\II}{\enm{\mathbb{I}}}
\newcommand{\RR}{\enm{\mathbb{R}}}
\newcommand{\QQ}{\enm{\mathbb{Q}}}
\newcommand{\FF}{\enm{\mathbb{F}}}
\newcommand{\PP}{\enm{\mathbb{P}}}
\newcommand{\Cc}{\enm{\cal{C}}}
\newcommand{\Dd}{\enm{\cal{D}}}
\newcommand{\Rr}{\enm{\cal{R}}}
\renewcommand{\phi}{\varphi}
\renewcommand{\theta}{\vartheta}
\renewcommand{\epsilon}{\varepsilon}
\renewcommand{\to}[1][]{\xrightarrow{\ #1\ }}
\newcommand{\old}[1]{}
\date{}
\begin{document}

\title[numerical range]
{On the numerical range of square matrices with coefficients in a degree $2$ Galois field extension}
\author{E. Ballico}
\address{Dept. of Mathematics\\
 University of Trento\\
38123 Povo (TN), Italy}
\email{ballico@science.unitn.it}
\thanks{The author was partially supported by MIUR and GNSAGA of INdAM (Italy).}
\subjclass[2010]{15A33; 15A60; 12D99; 12F99}
\keywords{numerical range; sesquilinear form; formally real field; number field}

\begin{abstract}
Let $L$ be a degree $2$ Galois extension of the field $K$ and $M$ an $n\times n$ matrix with coefficients in $L$. Let $\langle \ ,\ \rangle :
L^n\times L^n\to L$ be the sesquilinear form associated to the involution $L\to L$ fixing $K$. We use $\langle \ ,\ \rangle$ to define the
numerical range $\Nm (M)$ of $M$ (a subset of $L$), extending the classical case $K=\RR$, $L=\CC$ and the case of a finite field introduced by Coons, Jenkins, Knowles, Luke and Rault.
There are big differences with respect to both cases for number fields and for all fields in which the image of the norm map $L\to K$ is not closed by addition, e.g.,
$c\in L$ may be an eigenvalue of $M$, but $c\notin \Nm (M)$. We compute $\Nm (M)$ in some case, mostly with $n=2$.
\end{abstract}

\maketitle

For any integer $n>0$ and any field $L$ let $M_{n,n}(L)$ be the $L$-vector space of all $n\times n$ matrices with coefficients in $L$.
Let $K$ be a field and $L$ a degree $2$ Galois extension of $K$. Call $\sigma$ the generator of the Galois group of the extension $K\hookrightarrow L$. Thus $\sigma : L\to L$
is a field isomorphism, $\sigma ^2: L\to L$ is the identity map and $K = \{t\in L\mid \sigma (t)=t\}$. For any $u=(u_1,\dots ,u_n)\in L^n$, $v=(v_1,\dots ,v_n)\in L^n$
set $\langle u,v\rangle := \sum _{i=1}^{n} \sigma (u_i)v_i$. The map $\langle \ ,\ \rangle : L^n\times L^n\to L$ is sesquilinear, i.e. for all $u, v, w\in L^n$ and all $c\in L$
we have $\langle u+v,w\rangle = \langle u,w\rangle + \langle v,w\rangle$, $\langle u,v
+w\rangle = \langle u,v\rangle + \langle u,w\rangle$, $\langle cu,w\rangle = \sigma ({c})\langle u,w\rangle$ and $\langle u,cw\rangle = c\langle u,w\rangle$.
Set $C_n(1):= \{u\in L^n\mid \langle u,u\rangle =1\}$. For any $M\in M_{n,n}(M)$ set $\Nm (M):= \{\langle u,Mu\rangle \mid u\in C_n(1)\}$. Since $C_n(1) \ne \emptyset$, we have
$\Nm (M)\ne \emptyset$. As in the classical case when $K =\RR$,
$L=\CC$ and $\sigma$ is the complex conjugation the subset $\Nm (M)$ of $L$ is called the {\emph{numerical range}} of $M$ (\cite{gr}, \cite{hj}, \cite{hj1}, \cite{pt}). When $K$ is a finite field  the numerical range
was introduced in \cite{cjklr} and \cite{b}. In particular \cite{cjklr} built a bridge between the classical case and the finite field case and at certain points we will duly quote
the parts of \cite{cjklr}, which we adapt to our set-up.

Assume for the moment $L =K(i)$ with $K\subset \RR$ and $\sigma$ the complex conjugation. In this case, calling $\Nm (M)_\CC \subset \CC$ the usual numerical range of $M$,
we have $\Nm (M) \subseteq \Nm (M)_\CC$ and hence $\Nm (M)$ is a bounded subset of $\CC$.
But even in this case there are many differences, in particular as for number fields not every element of $K$ is a square. The main differences come from the structures of the sets $\Delta$ and $\Delta _n$ defined below.

Let $\Delta \subseteq K$ be the image of the norm map $\mathrm{Norm}_{L/K}: L\to K$, i.e. set $\Delta := \{a\sigma (a)\mid a\in L\}\subseteq K$. If $a\in K$, then $\sigma (a)=a$
and hence $\mathrm{Norm}_{L/K}(a) =a^2$. Thus $\Delta $ contains all squares of elements of $K$. In particular $0\in \Delta$ and $1\in \Delta$. Since the norm map $\mathrm{Norm}_{L/K}$ is multiplicative, $\Delta$ is closed under multiplication. If $c\in \hat{\Delta}:= \Delta \setminus \{0\}$, say $c=\sigma (a)a$ for some
$a\in L\setminus \{0\}$, then $1/c = \sigma (a^{-1})a^{-1}$ and hence $\hat{\Delta}$ is a multiplicative group. For any integer $n>0$ let $\Delta _n$ be the set of all sums of $n$ elements of $\Delta$. If $K=\RR$,
then $\Delta = \Delta _n =\RR _{\ge 0}$ for all $n\ge 1$. If $K=\FF _q$ is a finite field, then $\Delta = \FF _q$, because in this case the norm map is surjective (\cite[Remark 3]{b}); hence $K=\Delta =\Delta _n$ if $K$ is a finite field. If $K=\QQ$ and $L =\QQ (i)$, then $\Delta \subsetneq \Delta _2$ (Example \ref{w4.00}).

For any $\delta \in \Delta _n$ set $C_n(\delta ):= \{u\in L^n\mid \langle u,u\rangle =\delta\}$. We have $L^n = \sqcup _{\delta \in \Delta _n} C_n(\delta )$ and $C_n(\delta )\ne \emptyset$
for all $\delta \in \Delta _n$. 

For any $M = (m_{ij})\in M_{n,n}(L)$ let $M^\dagger$ be the matrix $M^\dagger = (n_{ij})$ with $n_{ij}=\sigma (m_{ij})$ for all $i, j$. We have $(M^\dagger )^\dagger =M$
and $\langle u,Mv\rangle = \langle M^\dagger u,v\rangle$ for all $u, v\in L^n$. We say that $M$ is unitary if $M^\dagger M =\II _{n,n}$ (where $\II _{n,n}$ is the identity $n\times n$-matrix),
i.e. if $M^\dagger = M^{-1}$. For any $U, M\in M_{n,n}(L)$ with $U$ unitary, we have $\Nm (U^\dagger MU) =\Nm (M)$. In the case $n=1$, say $M = (m_{11})$, we have
$\Nm (M) =\{m_{11}\}$. We have $\Nm (c\II _{n,n}) =\{c\}$ for every $c\in L$.
For any $\mu \in L$ and $c\in \Delta$, the \emph{circle with center $\mu$ and squared-radius $c$} is the set of all $z\in L$ such that $\sigma (z-\mu)(z-\mu) =c$.
This set is never empty, since it contains the points $\mu +b$, where $b\in L$ is such that $\sigma (b)b =c$ (two points, $b$ and $-b$, if $\mathrm{char}(K) \ne 2$ and $b\ne 0$). If $c=0$, then the circle is just $\{\mu\}$, the center. If $c\in \hat{\Delta}$, then $b\ne 0$ and hence (assuming $\mathrm{char}(K)\ne 2$) this circle has at least two points, $\mu +b$ and
$\mu -b$. Hence if $c\ne 0$, this circle is a smooth conic and (if $K$ is infinite) it contains infinitely many points (Lemma \ref{c1} and, if $\mathrm{char}(K)=2$, Example \ref{2c-1}). See section \ref{Sc} for more and in particular for its description if $K=\QQ$ and so $L$ is a quadratic number
field.

For any integer $n>0$ let $\hat{\Delta}_n$ denote the sum of $n$ elements of $\hat{\Delta}$. Note that $0\in \hat{\Delta}_2$
if and only if there is $a\in \hat{\Delta}$ with $-a\in \hat{\Delta}$. In the case $n=1$ each matrix is a diagonal matrix and each numerical range is a singleton. The case $n>1$ is more complicated and interesting. We prove the following results.

\begin{proposition}\label{i1}
Assume $\mathrm{char}(K)=0$. If $M\in M_{n,n}(L)$ and $\Nm (M)= \{c\}$ for some $c\in L$, then $M =c\II _{n\times n}$.
\end{proposition}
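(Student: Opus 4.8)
The plan is to move $c$ to the origin and show $M=0$ there. Since $\langle u,(M-c\II_{n\times n})u\rangle=\langle u,Mu\rangle-c\langle u,u\rangle=\langle u,Mu\rangle-c$ for every $u\in C_n(1)$, replacing $M$ by $M-c\II_{n\times n}$ reduces us to proving: if $\langle u,Mu\rangle=0$ for all $u\in C_n(1)$, then $M=0$ (we may assume $n\ge 2$, the case $n=1$ being trivial since the numerical range of a $1\times1$ matrix is $\{m_{11}\}$). Because $\mathrm{char}(K)=0$ we may write $L=K(\sqrt d)$ with $d\in K^{\ast}$ a non-square and $\sigma(\sqrt d)=-\sqrt d$, and introduce coordinates on $L^n\cong\AA^{2n}_K$ by setting $u_k=x_k+y_k\sqrt d$ with $x_k,y_k\in K$. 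In these coordinates $Q:=\langle u,u\rangle=\sum_{k=1}^n(x_k^2-dy_k^2)$ is a nondegenerate quadratic form over $K$ in $2n\ge 4$ variables, and $g:=\langle u,Mu\rangle=\sum_{i,j}\sigma(u_i)m_{ij}u_j$ is a homogeneous polynomial of degree $2$ in $x_1,\dots,x_n,y_1,\dots,y_n$ with coefficients in $L$.

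The crucial step is to upgrade ``$g$ vanishes on $C_n(1)$'' to ``$g$ vanishes identically''. The set $C_n(1)$ is precisely the set of $K$-points of the affine hypersurface $\Qq:=\{Q-1=0\}\subset\AA^{2n}_K$, which is smooth (its only possible singular point, the origin, has $Q-1=-1\neq0$) and contains the $K$-point $e_1$; the usual stereographic parametrization of $\Qq$ from $e_1$ then shows, since $K$ is infinite, that $C_n(1)=\Qq(K)$ is Zariski dense in $\Qq$. As $Q-1$ is absolutely irreducible ($Q$ being a nondegenerate quadratic form in at least three variables), a polynomial vanishing on $\Qq$ is divisible by $Q-1$; hence $g=(Q-1)h$ for a polynomial $h$, and since $\deg g=2=\deg(Q-1)$ the polynomial $h$ is a constant. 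But then the degree-$0$ part of $g=hQ-h$ equals $-h$, while $g$ is homogeneous of degree $2$; so $h=0$ and $g\equiv0$.

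It remains to extract the matrix entries from the identity $\sum_{i,j}\sigma(u_i)m_{ij}u_j\equiv0$ in $L[x_1,\dots,y_n]$. Substituting $u_k=x_k+y_k\sqrt d$: the coefficient of $x_i^2$ is $m_{ii}$, so $m_{ii}=0$; and for $i\neq j$ the coefficients of $x_ix_j$ and of $x_iy_j$ are $m_{ij}+m_{ji}$ and $\sqrt d\,(m_{ij}-m_{ji})$ respectively, giving $m_{ij}+m_{ji}=0$ and $m_{ij}=m_{ji}$, hence $m_{ij}=0$ (here $\mathrm{char}(K)\neq2$). Therefore $M=0$, and undoing the translation yields $M=c\II_{n\times n}$.

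The step I expect to be the real obstacle is the upgrade in the second paragraph: it relies on the quadric $\langle u,u\rangle=1$ carrying a Zariski-dense set of $K$-points, which is exactly where $\mathrm{char}(K)=0$ (equivalently, $K$ infinite) is used, and where both the argument and — as the introduction indicates — the conclusion itself can fail over finite fields. A more elementary, scheme-free packaging of the same idea fixes, for each pair $i\neq j$, the three-dimensional quadric $\{a\sigma(a)+b\sigma(b)=1\}\subset L^2$ through $(1,0)$ and applies the same density-and-coefficient bookkeeping to the degree-$2$ polynomial $\sigma(a)b\,m_{ij}+\sigma(b)a\,m_{ji}$, which vanishes on the $K$-points of that quadric once one knows $m_{ii}=m_{jj}=0$.
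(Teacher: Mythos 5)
Your proof is correct, and it takes a genuinely different route from the paper's. The paper argues entry by entry: after extracting the diagonal via the vectors $e_i$, it reduces (by transposing, permuting and rescaling) to a single off-diagonal entry $m_{12}=1$, $m_{21}=b$, and then exhibits explicit vectors in $C_2(1)$ on which $\langle u,Mu\rangle\ne 0$ --- first with coordinates in $K$ on the circle $x^2+y^2=1$, which handles every $b\ne -1$, and then, for the exceptional case $b=-1$, a vector with one coordinate outside $K$ supplied by a separate density lemma. You instead prove the full polynomial identity $\langle u,Mu\rangle\equiv 0$ in one stroke: Zariski density of $C_n(1)$ in the smooth, absolutely irreducible quadric $\{Q=1\}\subset \AA^{2n}_K$ forces the degree-$2$ form $g$ to lie in the ideal $(Q-1)$, and the homogeneity/degree count kills it; reading off the coefficients of $x_i^2$, $x_ix_j$ and $x_iy_j$ then recovers both the symmetric and the antisymmetric parts of $M$ simultaneously. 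Both arguments ultimately rest on the same fact --- a quadric with a $K$-point over an infinite field has Zariski-dense $K$-points, which is exactly the tool the paper deploys in Lemmas \ref{c1}, \ref{a4.1} and \ref{ccc2} --- but your packaging avoids the case split at $b=-1$ and the reduction to $2\times 2$ blocks, and it makes transparent why, over $K$ alone (Proposition \ref{cc1}), only the antisymmetric part of $M-c\II_{n\times n}$ is forced to vanish: there one only sees the coefficients of $x_ix_j$, not of $x_iy_j$. The one point worth stating explicitly is that divisibility $g=(Q-1)h$ is first obtained in $\overline{K}[x_1,\dots,y_n]$ via the Nullstellensatz and primality of $(Q-1)$, which is all the degree argument needs.
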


In the classical case any eigenvalue of $M\in M_{n,n}(\CC)$ is in its numerical range. When either $0\in \hat{\Delta}_2$ or $\Delta _2\ne \Delta $, then this is not always the case, as shown by the following theorems \ref{i2} and \ref{a3}.

\begin{theorem}\label{i2}
Assume $0\in \hat{\Delta}_2$ and take $c\in L$ and $\mu \in L^\ast$. Then there is $M\in M_{2,2}(L)$ with $c$ an eigenvalue of $M$, $c\notin \Nm (M)$
and $\Nm (M) =c+\mu \hat{\Delta}$.
\end{theorem}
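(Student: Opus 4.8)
The plan is to use the hypothesis $0\in\hat{\Delta}_2$ to make the form $\langle\ ,\ \rangle$ on $L^2$ isotropic, pass to a hyperbolic basis, and take $M$ essentially nilpotent in that basis. First I would record a reduction: for every $N\in M_{2,2}(L)$ and every $u\in C_2(1)$ one has $\langle u,(c\II_{2,2}+\mu N)u\rangle=c\langle u,u\rangle+\mu\langle u,Nu\rangle=c+\mu\langle u,Nu\rangle$, so $\Nm(c\II_{2,2}+\mu N)=c+\mu\Nm(N)$; moreover $c$ is an eigenvalue of $c\II_{2,2}+\mu N$ whenever $0$ is an eigenvalue of $N$, and (since $\mu\neq0$) $c\notin\Nm(c\II_{2,2}+\mu N)$ whenever $0\notin\Nm(N)$. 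Hence it suffices to build $N\in M_{2,2}(L)$ with $0$ an eigenvalue, $0\notin\Nm(N)$ and $\Nm(N)=\hat{\Delta}$. Since $1=\mathrm{Norm}_{L/K}(1)\in\hat{\Delta}$ and $\hat{\Delta}$ is a multiplicative group, the condition $0\in\hat{\Delta}_2$ says exactly that there are $\alpha,\beta\in L^\ast$ with $\sigma(\alpha)\alpha+\sigma(\beta)\beta=0$; then $v_1:=(\alpha,\beta)\in L^2\setminus\{0\}$ satisfies $\langle v_1,v_1\rangle=0$.

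Next I would complete $v_1$ to a basis adapted to the form. The form is nondegenerate (if $\langle u,w\rangle=0$ for all $w$ then $u=0$), so the $L$-linear map $w\mapsto\langle v_1,w\rangle$ is onto $L$ and its kernel is the line $Lv_1$ (it contains $v_1$ because $\langle v_1,v_1\rangle=0$). Pick $w_0\notin Lv_1$ and set $\rho:=\langle v_1,w_0\rangle\in L^\ast$, so that $\langle w_0,v_1\rangle=\sigma(\rho)$. For $t\in L$ a short computation gives $\langle w_0+tv_1,w_0+tv_1\rangle=\langle w_0,w_0\rangle+\mathrm{Tr}_{L/K}(t\sigma(\rho))$, where $\langle w_0,w_0\rangle\in K$ because it is $\sigma$-fixed. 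As $L/K$ is separable, $\mathrm{Tr}_{L/K}\colon L\to K$ is surjective, so there is $t\in L$ with $\mathrm{Tr}_{L/K}(t\sigma(\rho))=-\langle w_0,w_0\rangle$; put $v_2:=w_0+tv_1$. Then $v_1,v_2$ is an $L$-basis of $L^2$ with $\langle v_1,v_1\rangle=\langle v_2,v_2\rangle=0$, $\langle v_1,v_2\rangle=\rho$ and $\langle v_2,v_1\rangle=\sigma(\rho)$.

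Now let $N$ be the $L$-linear endomorphism with $Nv_1=0$ and $Nv_2=\sigma(\rho)^{-1}v_1$ (and identify it with its matrix in the standard basis); then $N$ is nilpotent and nonzero, so $0$ is its only eigenvalue, with eigenvector $v_1$. For $u=xv_1+yv_2$ with $x,y\in L$, expanding the form yields $\langle u,u\rangle=\mathrm{Tr}_{L/K}(\sigma(x)y\rho)$ and $\langle u,v_1\rangle=\sigma(y)\sigma(\rho)$, hence $\langle u,Nu\rangle=\sigma(\rho)^{-1}y\langle u,v_1\rangle=y\sigma(y)=\mathrm{Norm}_{L/K}(y)$. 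Therefore $\Nm(N)=\{\mathrm{Norm}_{L/K}(y)\mid x,y\in L,\ \mathrm{Tr}_{L/K}(\sigma(x)y\rho)=1\}$. If $y=0$ the constraint becomes $0=1$, so every admissible $y$ lies in $L^\ast$; conversely, for each $y\in L^\ast$ the element $\sigma(x)y\rho$ runs over all of $L$ as $x$ does, so by surjectivity of the trace there is an admissible $x$. Thus $\Nm(N)=\{\mathrm{Norm}_{L/K}(y)\mid y\in L^\ast\}=\hat{\Delta}$, while $0\notin\hat{\Delta}$ by definition. Setting $M:=c\II_{2,2}+\mu N$ finishes the proof: $c$ is an eigenvalue of $M$ (eigenvector $v_1$), $\Nm(M)=c+\mu\hat{\Delta}$, and $c\notin\Nm(M)$ because $0\notin\hat{\Delta}$ and $\mu\neq0$.

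The construction is short, so the real content is the idea rather than any laborious step: the hypothesis $0\in\hat{\Delta}_2$ forces an isotropic vector, around which one builds a hyperbolic basis, and choosing $N$ nilpotent there collapses the numerical range to an entire multiplicative coset $\mu\hat{\Delta}$ that manifestly omits its translate $c$. The one point requiring attention is to avoid the tempting assumption $\mathrm{char}(K)\neq2$: each normalization above should be carried out through surjectivity of $\mathrm{Tr}_{L/K}$ (available because $L/K$ is separable) rather than by dividing by $2$, and this is exactly what makes both the existence of $v_2$ and the equality $\Nm(N)=\hat{\Delta}$ go through in all characteristics.
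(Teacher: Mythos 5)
Your proof is correct, and its core idea coincides with the paper's: the hypothesis $0\in\hat{\Delta}_2$ produces an isotropic vector $v_1$, one takes $M=c\II_{2,2}+\mu N$ with $N$ a rank-one nilpotent map killing $v_1$ and with image $Lv_1$, and then $\langle u,Mu\rangle-c$ is forced to be $\mu$ times a nonzero norm, so $\Nm(M)=c+\mu\hat{\Delta}$ and $c\notin\Nm(M)$ because $0\notin\hat{\Delta}$. Where you genuinely diverge is in the execution. The paper completes $v_1$ to the basis $\{v_1,e_2\}$ (with $e_2$ anisotropic), normalizes $\langle v_1,e_2\rangle=1$, and must then show that for every $y\in L^\ast$ the constraint $\sigma(x)y+\sigma(y)x+\sigma(y)y=1$ has a solution $x$; it does this by explicit coordinate computations split into the cases $\mathrm{char}(K)\ne 2$ and $\mathrm{char}(K)=2$ (inside Proposition \ref{i2.1}, to which Theorem \ref{i2} is reduced), and that computation is the least transparent step of the published argument. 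You instead build a genuinely hyperbolic basis $\{v_1,v_2\}$ with both vectors isotropic, using surjectivity of $\mathrm{Tr}_{L/K}$ (valid since $L/K$ is separable) to kill $\langle v_2,v_2\rangle$; the normalization constraint then becomes $\mathrm{Tr}_{L/K}(\sigma(x)y\rho)=1$, solvable for $x$ by the same surjectivity, uniformly in all characteristics. This buys a cleaner and characteristic-free verification that every value $\sigma(y)y\in\hat{\Delta}$ is actually attained, at the small cost of one extra lemma (existence of the second isotropic vector), whereas the paper's choice of $e_2$ requires no construction but pushes the difficulty into the final solvability computation.
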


See Proposition \ref{i2.1} for a description of the matrices $M$ giving Theorem \ref{i2}. We have $0\in \hat{\Delta}_2$ for some real quadratic number fields (Lemma \ref{cz1}).

If $M$ has an eigenvalue $a$ with eigenvector $u$ with $\langle u,u\rangle \in \hat{\Delta}$, then $a\in \Nm (M)$ (Remark \ref{i0.1.0}).

Part (a) of the following result is an adaptation of \cite[Theorem 1.2 ({c})]{cjklr}
\begin{theorem}\label{a3}
Assume $n=2$ and that $M$ has a unique eigenvalue, $c$. Assume that $c$ has an eigenvector $v$ with $\delta := \langle v,v\rangle \ne 0$ and that $M \ne c\II _{2,2}$. 

\quad (a) $c\in \Nm (M)$ if and only if $\delta \in \Delta$.

\quad (b) Assume $\delta \in \Delta$. There is $\mu \in L^\ast$ such that $(\Nm (M) -c)/\mu$ is the union of $\{0\}$ and all all circles $C(k(1-k), 0)$ with $k\in \hat{\Delta}\cap (1-\hat{\Delta})$. \end{theorem}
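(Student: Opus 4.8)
The plan is to pass everything to the nilpotent part $N:=M-c\II_{2,2}$. Since $c$ is the only eigenvalue, the characteristic polynomial is $(t-c)^2$, so $N^2=0$ by Cayley--Hamilton, and $N\neq 0$ because $M\neq c\II_{2,2}$; hence $N$ has rank $1$ and $\op{Im}(N)=\ker(N)$, and as $v\in\ker(N)$ this common line is $Lv$. Thus there is a nonzero $L$-linear functional $\psi\colon L^2\to L$ with $\ker\psi=Lv$ and $Nu=\psi(u)\,v$ for all $u$, so that for $u\in C_2(1)$ we have $\langle u,Mu\rangle=c+\langle u,Nu\rangle=c+\psi(u)\langle u,v\rangle$, i.e.\ $\Nm(M)-c=\{\psi(u)\langle u,v\rangle : \langle u,u\rangle=1\}$. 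Next I would fix a basis adapted to $v$: since $\langle v,v\rangle=\delta\neq 0$ the line $Lv$ is nondegenerate, so $L^2=Lv\oplus v^\perp$ with $v^\perp=:Lv'$ also nondegenerate, $\epsilon:=\langle v',v'\rangle\neq 0$; in the basis $(v,v')$ the Gram matrix is $\op{diag}(\delta,\epsilon)$, and comparing with the Gram matrix $\II_{2,2}$ of the standard basis via $G'=P^\dagger G P$ gives $\delta\epsilon=\mathrm{Norm}_{L/K}(\det P)\in\hat\Delta$. Since $\hat\Delta$ is a group, this yields the key equivalence $\delta\in\Delta\iff\epsilon\in\Delta$.

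For part (a): $\langle u,Nu\rangle=\psi(u)\langle u,v\rangle$ vanishes precisely when $u\in\ker\psi=Lv$ or $\langle u,v\rangle=0$, i.e.\ $u\in v^\perp=Lv'$ (using $\langle u,v\rangle=\sigma(\langle v,u\rangle)$). On $Lv$ we have $\langle\lambda v,\lambda v\rangle=\sigma(\lambda)\lambda\,\delta$, which represents $1$ for some $\lambda$ iff $\delta^{-1}\in\hat\Delta$, i.e.\ iff $\delta\in\Delta$; on $Lv'$ it represents $1$ iff $\epsilon\in\Delta$. By the equivalence above these coincide, so $c\in\Nm(M)$ iff $\delta\in\Delta$.

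For part (b): now $\delta\in\Delta$, hence $\epsilon\in\Delta$, so we may rescale $v$ and $v'$ to an orthonormal basis with $\langle v,v\rangle=\langle v',v'\rangle=1$ and $\langle v,v'\rangle=0$ (rescaling the eigenvector does not change $\Nm$). Writing $Nv'=\beta v$ with $\beta\in L^\ast$ (nonzero since $N\neq0$), a vector $u=xv+yv'$ has $\langle u,u\rangle=\sigma(x)x+\sigma(y)y$ and $\langle u,Nu\rangle=\beta\,y\,\sigma(x)$, so with $\mu:=\beta$ we get $(\Nm(M)-c)/\mu=\{\,y\sigma(x) : \sigma(x)x+\sigma(y)y=1\,\}$. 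For such $x,y$ put $k:=\sigma(x)x\in\Delta$; then $1-k=\sigma(y)y\in\Delta$ and $\sigma(y\sigma(x))\,(y\sigma(x))=k(1-k)$, while $y\sigma(x)=0\iff xy=0\iff k\in\{0,1\}$ and $y\sigma(x)\neq0$ forces $k\in\hat\Delta\cap(1-\hat\Delta)$; this gives the inclusion $\subseteq$. Conversely, given $k\in\hat\Delta\cap(1-\hat\Delta)$ choose $x_0,y_0\in L^\ast$ with $\sigma(x_0)x_0=k$, $\sigma(y_0)y_0=1-k$; then $z_0:=y_0\sigma(x_0)$ lies on $C(k(1-k),0)$, and any point $z$ of that circle equals $wz_0$ for the norm-one element $w:=z/z_0$, so $z=y\sigma(x)$ with $x=x_0$, $y=wy_0$ (and $\sigma(x)x+\sigma(y)y=k+(1-k)=1$); finally $0$ is attained. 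Hence $(\Nm(M)-c)/\mu=\{0\}\cup\bigcup_{k}C(k(1-k),0)$, as claimed.

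The step I expect to be the main obstacle is the discriminant identity $\delta\epsilon\in\Delta$ together with the group structure of $\hat\Delta$: this is exactly what prevents $v^\perp$ from ever supplying a norm-one vector when $Lv$ does not, and so makes the ``if and only if'' in (a) true. Everything else is bookkeeping, provided one is careful that no ``definiteness'' of $\langle\ ,\ \rangle$ is assumed and that the rescalings of $v$ and $v'$ leave $\Nm(M)$ (hence $\Nm(M)-c$) unchanged.
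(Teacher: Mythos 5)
Your proof is correct, and its overall shape matches the paper's: reduce to $c=0$, pass to a basis adapted to the eigenvector $v$, observe that in orthonormal coordinates $\langle u,Mu\rangle -c$ reduces to $\mu\,\sigma(x)y$ subject to $\sigma(x)x+\sigma(y)y=1$, and identify the possible values of $\sigma(x)y$ with the circles $C(k(1-k),0)$, $k\in\hat{\Delta}\cap(1-\hat{\Delta})$. The one genuine difference is how you handle the orthogonal complement of $v$ and the ``only if'' direction of part (a). The paper writes $v=a_1e_1+a_2e_2$ and makes the explicit choice $w:=-\sigma(a_2)e_1+\sigma(a_1)e_2$, which satisfies $\langle v,w\rangle =0$ and, crucially, $\langle w,w\rangle =\delta$; thus the two lines on which $\langle u,Mu\rangle =c$ can occur carry the \emph{same} value $\delta$, and part (a) follows at once from Lemma \ref{i0.1}. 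You instead take an arbitrary generator $v'$ of $v^{\perp}$ with $\epsilon :=\langle v',v'\rangle$ and prove $\delta\epsilon =\mathrm{Norm}_{L/K}(\det P)\in\hat{\Delta}$ by comparing Gram matrices, whence $\delta\in\Delta$ if and only if $\epsilon\in\Delta$ because $\hat{\Delta}$ is a multiplicative group. Your discriminant argument is more robust (it would apply to any nondegenerate Hermitian form whose Gram determinant lies in $\hat{\Delta}$, not just the standard one), while the paper's explicit $w$ is shorter and yields the sharper fact $\epsilon =\delta$. Your treatment of part (b) --- sweeping out each circle $C(k(1-k),0)$ by multiplying a single point $z_0=y_0\sigma(x_0)$ by norm-one elements --- is a clean way to get the reverse inclusion and amounts to the same computation as the end of the paper's proof, where $y$ is recovered as $w/\sigma(x)$ from a prescribed $w$ on the circle. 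No gaps; the only implicit step (that ``unique eigenvalue'' forces the characteristic polynomial to be $(t-c)^2$, so that $N:=M-c\II_{2,2}$ is nilpotent of rank one) is shared with the paper's proof and is justified by triangularizing $M$ over $L$ using the eigenvector $v$.
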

For any $\delta \in \Delta _2\setminus \{0\}$ and $v\in L^2$ with $\langle v,v\rangle =\delta$ the set of all $M$ as in Theorem \ref{a3} are exactly the matrices $M$ such that $\mathrm{Ker}(M-c\II _{2\times 2}) =\mathrm{Im}(M) =Lv$.

In section \ref{Scc} we consider the case $M\in M_{n,n}(K)$. Set $C_n(1,K):= \{(x_1,\dots ,x_n)\in K^n\mid x_1^2+\cdots +x_n^2=1\}$. Note that $C_n(1,K):= C_n(1)\cap K^n$. Note
that $C_n(1,K)\ne 0$ (e.g. take $x_i=1$ and $x_j=0$ for all $j\ne i$).
The {\emph{$K$-numerical range}} $\Nm (M)_K$ of $M$ is the set of all $\langle u,Mu\rangle$ with $u\in C_n(1,K)$. We have $\Nm (M)_K\subseteq K$. The case $\mathrm{char}(K) =2$
is quite different (and easier) from the case $\mathrm{char}(K) \ne 2$.

\begin{proposition}\label{cc1}
Assume $\mathrm{char}(K) \ne 2$. Take $M\in M_{n,n}(K)$, $n>1$. We have $\Nm (M)_K = \{c\}$ if and only if the matrix $M-c\II _{n\times n}$ is antisymmetric.
\end{proposition}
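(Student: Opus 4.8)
The plan is to reduce $\Nm(M)_K=\{c\}$ to a statement about the symmetric bilinear form attached to $M$, and then to pin down the entries of that form by evaluating it on a short, explicit list of vectors in $C_n(1,K)$.

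For $u\in K^n$ we have $\sigma(u_i)=u_i$, so $\langle u,Mu\rangle$ is the scalar $u^{T}Mu$; since a scalar equals its own transpose, $u^{T}Mu=u^{T}M^{T}u$, hence $\langle u,Mu\rangle=u^{T}Su$ where $S:=\tfrac12(M+M^{T})$ is the symmetric part of $M$ (here $\mathrm{char}(K)\neq 2$ is used). Replacing $M$ by $M-c\II_{n\times n}$ we may assume $c=0$: for $u\in C_n(1,K)$ one has $\langle u,(M-c\II_{n\times n})u\rangle=\langle u,Mu\rangle-c\langle u,u\rangle=\langle u,Mu\rangle-c$, so $\Nm(M)_K=\{c\}$ if and only if $\Nm(M-c\II_{n\times n})_K=\{0\}$; and $(M-c\II_{n\times n})^{T}=-(M-c\II_{n\times n})$ is equivalent to $M+M^{T}=2c\II_{n\times n}$, i.e.\ to the symmetric part of $M$ being $c\II_{n\times n}$. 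So the claim reduces to: $\Nm(M)_K=\{0\}$ if and only if $S=0$.

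If $S=0$ then $u^{T}Mu=u^{T}Su=0$ for every $u$, so $\Nm(M)_K=\{0\}$. Conversely, assume $u^{T}Su=0$ for all $u\in C_n(1,K)$. Taking $u=e_i\in C_n(1,K)$ gives $S_{ii}=0$ for all $i$. Fix $i\neq j$ and choose $a,b\in K$ with $a^{2}+b^{2}=1$ and $ab\neq 0$; then $ae_i+be_j\in C_n(1,K)$ and $0=(ae_i+be_j)^{T}S(ae_i+be_j)=a^{2}S_{ii}+2abS_{ij}+b^{2}S_{jj}=2abS_{ij}$, hence $S_{ij}=0$. Since $S$ is symmetric this forces $S=0$, which completes the argument.

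The one nontrivial input — and the main obstacle — is the existence of $a,b\in K$ with $a^{2}+b^{2}=1$ and $ab\neq 0$, i.e.\ of a $K$-point of the conic $x^{2}+y^{2}=1$ off the coordinate axes. The rational parametrization $a=(1-t^{2})/(1+t^{2})$, $b=2t/(1+t^{2})$ (for $t\in K$ with $1+t^{2}\neq 0$) satisfies $a^{2}+b^{2}=1$ identically and has $ab\neq 0$ as soon as $t\notin\{0,1,-1\}$; thus a suitable point exists whenever $K$ contains some $t$ with $t\notin\{0,1,-1\}$ and $t^{2}\neq -1$. This is automatic if $K$ is infinite; if $K=\FF_q$ has odd order, the conic $x^{2}+y^{2}=1$ has $q\pm1$ points (according as $-1$ is or is not a square in $K$), so a point off the axes exists once $q\ge 7$. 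The few remaining small fields must be examined directly; for $n\ge 3$ one has more room, since one can also use points of $\sum x_i^{2}=1$ supported on three coordinates, together with their sign changes, to isolate each $S_{ij}$. Once enough such test vectors are available, the argument above closes with no further computation.
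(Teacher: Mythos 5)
Your argument is correct and is essentially the paper's proof: one direction via the observation that only the symmetric part $\tfrac12(M+M^{T})$ matters (the paper's Lemma \ref{cc4}), and the converse by reading the diagonal off the vectors $e_i$ (Remark \ref{cc3}) and killing each off-diagonal entry of the symmetric part with a point of $x^2+y^2=1$ having $xy\ne 0$, which is exactly the computation the paper imports from the first part of the proof of Proposition \ref{i1}. Your closing caveat is not a defect of your write-up but a genuine feature of the statement: the paper's own argument also needs such a point (it appeals to the infinitude of $K$-points of the circle, proved there under the hypothesis that $K$ is infinite), and indeed the proposition fails over $\FF_3$ and $\FF_5$ for $n=2$ (all points of $x^2+y^2=1$ lie on the axes there), so the standing infiniteness assumption on $K$ must be understood as in force.
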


\begin{proposition}\label{cc6}
Assume $\mbox{char}(K) =2$ and take $M=(m_{ij})\in M_{n,n}(K)$.

\quad (a) We have $\Nm (M)_K = \{c\}$ for some $c\in K$ if and only if $m_{ii}=c$ for all $i$ and $m_{ij}=m_{ji}$ for all $i\ne j$.

\quad (b) If $\sharp (\Nm (M)_K) \ne 1$ and $K$ is infinite, then $\Nm (M)_K$ and $K$ have the same cardinality.
\end{proposition}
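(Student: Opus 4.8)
The plan is to reduce the whole statement to the affine hyperplane $H:=\{u=(u_1,\dots ,u_n)\in K^n\mid u_1+\cdots +u_n=1\}$ together with the explicit function $q(u):=\langle u,Mu\rangle$. First I would note that since $\mathrm{char}(K)=2$ the Frobenius $t\mapsto t^2$ is injective on $K$, so the equation $\sum _i u_i^2=(\sum _i u_i)^2=1$ defining $C_n(1,K)$ inside $K^n$ is equivalent to $\sum _i u_i=1$; hence $C_n(1,K)=H$. Moreover $\sigma $ fixes $K$, so for $u\in K^n$ one has $\langle u,Mu\rangle =u^{T}Mu=q(u)$, where $q(u)=\sum _{i}m_{ii}u_i^2+\sum _{i<j}(m_{ij}+m_{ji})u_iu_j$ is a polynomial function of degree at most $2$. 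Thus $\Nm (M)_K=q(H)$, and everything reduces to studying this quadratic function on the hyperplane $H$.

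For part (a), the implication $(\Leftarrow )$ is a one-line computation: if $m_{ii}=c$ for all $i$ and $m_{ij}=m_{ji}$ for $i\ne j$, then on $H$ all cross terms vanish because $m_{ij}+m_{ji}=2m_{ij}=0$, so $q(u)=c\sum _i u_i^2=c(\sum _i u_i)^2=c$. For $(\Rightarrow )$, assume $q\equiv c$ on $H$. Evaluating at the standard basis vectors $e_i\in H$ gives $m_{ii}=q(e_i)=c$ for every $i$. Then, for fixed $i\ne j$ and arbitrary $\lambda \in K$, the vector $\lambda e_i+(1+\lambda )e_j$ lies in $H$, and expanding $q$ on it (using $m_{ii}=m_{jj}=c$ and the identity $(1+\lambda )^2=1+\lambda ^2$) yields $q(\lambda e_i+(1+\lambda )e_j)=c+(m_{ij}+m_{ji})(\lambda +\lambda ^2)$. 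Forcing this to equal $c$ for some $\lambda \notin \{0,1\}$ gives $m_{ij}+m_{ji}=0$, i.e.\ $m_{ij}=m_{ji}$.

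For part (b), I would first observe that $n\ge 2$ (for $n=1$ we have $\Nm (M)_K=\{m_{11}\}$, a singleton), so $H$ is an affine subspace of dimension $n-1\ge 1$; and since $\sharp (\Nm (M)_K)\ne 1$ while $\Nm (M)_K=q(H)\ne \emptyset $, the function $q$ is non-constant on $H$. Pick $a\ne b$ in $H$ with $q(a)\ne q(b)$. The line $\ell :=\{a+t(b-a)\mid t\in K\}$ is contained in $H$, and $f(t):=q(a+t(b-a))$ is a one-variable polynomial of degree at most $2$, non-constant since $f(0)=q(a)\ne q(b)=f(1)$. Hence every fibre $f^{-1}(y)$, $y\in K$, has at most $\deg f\le 2$ points, so $\sharp K=\sum _{y\in f(K)}\sharp f^{-1}(y)\le 2\cdot \sharp f(K)$. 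As $K$ is infinite this forces $f(K)$ to be infinite with $\sharp f(K)=\sharp K$, and since $f(K)\subseteq \Nm (M)_K\subseteq K$ we conclude $\sharp (\Nm (M)_K)=\sharp K$.

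Every step is elementary; the point demanding the most care is the characteristic-$2$ expansion in $(\Rightarrow )$ of (a), where the cancellations must be tracked precisely and where the choice $\lambda \notin \{0,1\}$ requires $\sharp K\ge 3$ — note that (a) actually fails for $K=\FF _2$ (e.g.\ the $2\times 2$ matrix with $m_{12}=1$ and all other entries $0$ has $\Nm (M)_{\FF _2}=\{0\}$ yet is not symmetric), so one should add that hypothesis in (a); in (b) the field is assumed infinite, so no such issue arises, and the remaining cardinal bookkeeping is routine.
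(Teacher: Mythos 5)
Your proof is correct and rests on the same key reduction as the paper's (which delegates everything to Remark \ref{cc5}): in characteristic $2$ the sphere $C_n(1,K)$ coincides with the affine hyperplane $\{x_1+\cdots +x_n=1\}$, and $\Nm (M)_K$ is the image of a polynomial function of degree $\le 2$ on that hyperplane. From there the execution diverges. The paper eliminates $x_n$ and case-analyzes the coefficients of the resulting polynomial $f_M$ on $K^{n-1}$ (its Claims 1 and 2 treat the quadratic part entry by entry); you instead settle (a) by evaluating at $e_i$ and at $\lambda e_i+(1+\lambda )e_j$, and (b) by restricting $q$ to a line through two points with distinct values, so that non-constancy together with fibres of size $\le 2$ yields $\sharp (q(H))=\sharp K$ in one stroke. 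Your part (b) is shorter and more robust than the paper's coefficient analysis. Finally, your remark that the ``only if'' direction of (a) fails over $\FF_2$ (the $2\times 2$ matrix with $m_{12}=1$ and all other entries $0$ has $K$-numerical range $\{0\}$ yet is not symmetric) is a genuine correction to the statement: the paper's own argument likewise invokes infiniteness of $K$ precisely when the quadratic part is nonzero, so part (a) should carry the hypothesis $\sharp K\ge 3$ (as your argument needs) or $K$ infinite (as the paper's does).
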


\section{Circles}\label{Sc}

Let $\overline{L}$ be an algebraic closure of $L$. In this section we assume that $K$ is infinite and that $\mathrm{char}(K) \ne 2$ (see Example \ref{2c-1} for the case $\mathrm{char}(K)= 2$). With these assumptions there is $\alpha \in K$, which is not a square
and with $L =K(\sqrt{\alpha})$. Fix $\beta \in L$ such that $\beta ^2=\alpha$. In $\overline{L}$ the equation $t^2=\alpha$ has $\beta$ and $-\beta$ as its only solutions.
$L$ is a $2$-dimensional $K$ vector space over $K$ with $1$ and $\beta$ as its basis. Hence for any $z\in L$ there are uniquely determined $x, y\in K$
such that $z =x+y\beta$. Since $\sigma (\beta )=-\beta$ and $\sigma (t)=t$ for every $t\in K$, we have $\sigma (z) = x-y\beta$ and hence
$\sigma (z)z = x^2-y^2\alpha$. Take $k, \mu \in L$. The map $z\mapsto z-\mu$ induces a bijection between  the set $\{z\in L\mid \sigma (z-\mu )(z-\mu ) =k\}$
and the set $G(k,0):= \{z\in L\mid \sigma (z)z=k\}$. Hence it is sufficient to study the circles with center $0\in L$. By the definition of $\Delta $ if $k\notin \Delta $, then $G(k,0) =\emptyset$, while if $k\in \Delta $ we have $G(k,0) \ne \emptyset$. We have $G(0,0) =\{0\}$, because $\sigma (z)z =0$ if and only if $z=0$.
Write $z = x+y\beta$ and hence $\sigma (z) = x-y\beta$ and $\sigma (z)z =x^2-\alpha y^2$. Thus $G(k,0) =\{(x,y)\in K^2 \mid x^2-\alpha y^2=k\}$.
Now assume $k\in \hat{\Delta} =\Delta \setminus \{0\}$. Write $k= \sigma ({c})c$ for some $c\in L^\ast$. Note that $\sigma (z)z=c$ if and only either $z=c$ or $z=-c$.
Since $\mathrm{char}(K)\ne 2$, the set $G(k,0)$ contains at least two points, $-c$ and $c$.

\begin{lemma}\label{c1}
If $k\in \hat{\Delta}$ the circle $G(k,0) \subset L=K^2$ is a smooth affine conic over $K$. If $K$ is infinite, then $G(k,0)$ and $K$ have the same cardinality.
\end{lemma}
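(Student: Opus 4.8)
The plan is to view $G(k,0)$, which by the computation above is the affine curve $\{(x,y)\in K^2\mid x^2-\alpha y^2=k\}$, as the set of $K$-rational points of the projective conic $\bar{C}\subset\PP^2$ with homogeneous equation $X^2-\alpha Y^2-kZ^2=0$, and then to use that the hypothesis $k\in\hat{\Delta}$ provides a $K$-point of $\bar{C}$ from which $\bar{C}$ can be rationally parametrized.

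First I would verify smoothness. Since $\mathrm{char}(K)\ne 2$, $\alpha\ne 0$ (it is not a square in $K$) and $k\ne 0$ (because $k\in\hat{\Delta}$), the diagonal quadratic form $X^2-\alpha Y^2-kZ^2$ is non-degenerate, so $\bar{C}$ is a smooth plane conic; concretely, on the affine curve the partials $2x$ and $-2\alpha y$ vanish simultaneously only at the origin, which does not lie on the curve because $k\ne 0$. A smooth plane conic is geometrically irreducible, so $G(k,0)$ is indeed a smooth affine conic over $K$. Next I would determine the behaviour at infinity: setting $Z=0$ yields $X^2=\alpha Y^2$, which has no solution in $\PP^1(K)$ since $\alpha$ is not a square in $K$; hence $\bar{C}$ has no $K$-point at infinity and $G(k,0)=\bar{C}(K)$. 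Finally, writing $k=\sigma(c)c$ with $c=x_0+y_0\beta$ and $x_0,y_0\in K$, we get $x_0^2-\alpha y_0^2=k$, so $P_0:=(x_0,y_0)$ is a $K$-point of $\bar{C}$.

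For the cardinality assertion, assume $K$ is infinite. The inclusion $G(k,0)\subseteq K^2$ gives $\sharp G(k,0)\le\sharp K$. For the opposite inequality, project $\bar{C}$ from $P_0$: for each slope $t\in K$ the line $(x,y)=(x_0+s,\,y_0+ts)$ meets the conic exactly where $s\bigl(2(x_0-\alpha y_0 t)+s(1-\alpha t^2)\bigr)=0$, and because $\alpha$ is not a square we have $1-\alpha t^2\ne 0$ for every $t\in K$, so besides $P_0$ there is a unique second intersection point $\phi(t)\in G(k,0)$ with coordinates rational in $t$ over $K$. Distinct slopes give distinct secant lines through $P_0$, hence distinct points $\phi(t)$, with the single exception of the value of $t$ corresponding to the tangent direction at $P_0$, which returns $P_0$; thus $\phi$ is injective on $K$ minus at most one element and $\sharp G(k,0)\ge\sharp K$. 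Combining the two inequalities, $\sharp G(k,0)=\sharp K$. (Equivalently, projection from $P_0$ is a $K$-isomorphism $\bar{C}\cong\PP^1_K$, and $\sharp\PP^1(K)=\sharp K$ for infinite $K$.)

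This argument is entirely elementary; the only delicate points are the bookkeeping of the degenerate cases in the parametrization — the vertical line through $P_0$, the tangent direction at $P_0$, and the possibility $y_0=0$ — and the twofold role of the non-squareness of $\alpha$, which simultaneously prevents $1-\alpha t^2$ from vanishing on $K$ and prevents $\bar{C}$ from acquiring $K$-points at infinity. The one genuinely essential hypothesis is $k\in\hat{\Delta}$: without a $K$-rational point a conic of this type can have no $K$-points at all (for instance $x^2+y^2=-1$ over $\QQ$), so it is the existence of $P_0$, not the shape of the conic, that forces the cardinality conclusion.
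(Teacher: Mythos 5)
Your proof is correct and follows essentially the same route as the paper's: projectivize to the conic $X^2-\alpha Y^2-kZ^2=0$, check smoothness via the partial derivatives using $\mathrm{char}(K)\ne 2$, $\alpha\ne 0$, $k\ne 0$, and use the $K$-point supplied by $k\in\hat{\Delta}$ to parametrize the conic by $\PP^1_K$ via linear projection. The only (harmless) difference is that you carry out the projection explicitly and show there are \emph{no} $K$-points at infinity (the content of the paper's Remark~\ref{c2}), whereas the paper's proof only needs, and only notes, that there are at most two.
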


\begin{proof}
Write $k= \sigma ({c})c$ for some $c\in L^\ast$. We saw that $G(k,0)$ contains the points $c$ and $-c$ and in particular $G(k,0)\ne \emptyset$. See $x, y, z$ as homogeneous variables of $\PP^2(K)$, with the line $\ell _\infty = \{z=0\}$ as the set $\PP^2(K)\setminus K^2$. Let $D(k,0)\subset \PP^2(K)$ be the conic with
$g(x,y,z):= x^2-\alpha y^2-kz^2$ as its equation. The linear forms $2x$, $-2\alpha y$ and $-2kz$ are the partial derivatives of $g(x,y,z)$. Set $D(k,0)_{\overline{K}}:= \{(x:y:z)\in \PP^2(\overline{L})\mid g(x,y,z)=0\}$.
Since $\alpha \ne 0$, $k\ne 0$ and $\mathrm{char}(K)\ne 2$, the partial derivatives of $g(x,y,z)$ have no common zero in $\PP^2(\overline{L})$. Thus $g(x,y,z)$
is irreducible and $D(k,0)_{\overline{L}}$ is a smooth conic. Hence $D(k,0)$ is a smooth conic defined over $K$. Since $D(k,0)$ has a $K$-point, $c$, $D(k,0)$ is isomorphic
to $\PP^1_K$ (use the linear projection from $c$) and in particular (for infinite $K$), $K$ and $D(k,0)$ have the same cardinality. The set $D(k,0)\cap \ell _\infty$ has at most two points, because $g(x,y,z)$ is irreducible and so
$ \ell _\infty$ is not a component of $D(k,0)$. Thus (since $K$ is infinite) $G(k,0)$ and $K$ have the same cardinality.
\end{proof}

\begin{remark}\label{c2}
Take $\ell _\infty$, $D(k,0)$ and $g(x,y,z):= x^2-\alpha y^2-kz^2$ as in the proof of Lemma \ref{c1}. We saw that $G(k,0) = D(k,0)\setminus \ell _\infty \cap D(k,0)$. Here we check
that $\ell _\infty \cap D(k,0) =\emptyset$, i.e. $G(k,0)=D(k,0)$. We have $\ell _\infty \cap D(k,0) =\{(x:y:0)\in \PP^2(K)\mid x^2-\alpha y^2=0\}$.
Since $\alpha$ is not a square in $K$, if $(x,y)\in K^2$ and $x^2=\alpha y^2$, then $x=y=0$.
\end{remark}

\begin{example}\label{c3}
Take $K=\QQ$. Hence $L$ is a quadratic number field. There is a unique square-free integer $d\notin \{0,1\}$ such that $L =\QQ(\sqrt{d})$ (\cite[Ch 13, \S 1]{ir}). Take $k\in \hat{\Delta}$.
If $d>0$, then $G(k,0)$ is a hyperbola with infinitely many points and it is unbounded. If $d<0$, then $G(k,0)$ is an ellipsis and in particular it is bounded; hence each $\Delta _n$ is bounded.
\end{example}

\section{Lemmas and examples}\label{Sp}
For any field $F$ set $F^\ast:= F\setminus \{0\}$. Let $e_1 =(1,0,\dots ,0),\dots ,e_n =(0,\dots ,0,1)$ be the standard basis of $L^n$. For any $M\in M_{n,n}(L)$ let $\Nm _0(M) \subseteq L$ be the union of all $\langle u,Mu\rangle$ with $\langle u,u\rangle =0$. 

\begin{remark}\label{i3}
Take $M = (m_{ij})\in M_{n,n}(L)$. Since $m_{ii} =\langle e_i,Me_i\rangle$, all diagonal elements of $M$
are contained in $\Nm (M)$.
\end{remark}

\begin{remark}\label{w0}
Fix $\delta \in \Delta _n$ and $a\in \Delta \setminus \{0\}$. Take $b\in L$ such that $a =b\sigma (b)$. For any $u\in L^n$ we have
$\langle bu,bu\rangle =a\langle u,u\rangle$ and hence $C_n(a\delta ) =bC_n(\delta)$.
\end{remark}

\begin{remark}\label{w1}
Since $\sigma (x)=x$ for all $x\in K$, $\Delta$ contains all squares in $K$.
\end{remark}

\begin{remark}\label{a4}
For any $M\in M_{n,n}(L)$ and any $c, d\in L$ we have $\Nm (cM+d\II _{n\times n}) = d +c\Nm (M)$.
\end{remark}

\begin{lemma}\label{w2}
 Fix $c\in \Delta _n \setminus \{0\}$. Then $1/c\in \Delta _n$.
 \end{lemma}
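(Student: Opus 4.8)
The plan is to exploit the multiplicative structure of $\hat\Delta$ together with the fact that dividing a sum by a fixed scalar distributes over the summands. First I would record that $\Delta_n\subseteq K$, since $\Delta\subseteq K$ and $K$ is closed under addition; hence $c\in K^\ast$. Because $\mathrm{Norm}_{L/K}(c)=c^2$ for $c\in K$ (as noted in the introduction), we get $c^2\in\Delta$, and since $c\ne 0$ also $c^2\in\hat\Delta$. As $\hat\Delta$ is a multiplicative group, it follows that $c^{-2}\in\hat\Delta\subseteq\Delta$.

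Next I would write $c=a_1+\cdots+a_n$ with each $a_i\in\Delta$, which is possible by the definition of $\Delta_n$. Then
$$\frac1c=\frac{c}{c^2}=\sum_{i=1}^n a_i c^{-2}.$$
For each $i$, if $a_i=0$ then $a_ic^{-2}=0\in\Delta$, while if $a_i\ne 0$ then $a_i\in\hat\Delta$ and $a_ic^{-2}$ is a product of two elements of the group $\hat\Delta$, hence lies in $\hat\Delta\subseteq\Delta$. In either case $a_ic^{-2}\in\Delta$, so $1/c$ is a sum of $n$ elements of $\Delta$, i.e. $1/c\in\Delta_n$, as claimed.

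There is no real obstacle here: the only thing to be slightly careful about is that the summands $a_i$ may be zero, so one cannot directly invoke the group structure of $\hat\Delta$ on each term, but the observation that $0\in\Delta$ handles this case separately. The argument uses only that $\Delta$ is closed under multiplication, that $\hat\Delta$ is closed under inversion, that $0\in\Delta$, and that $c^2\in\Delta$ for $c\in K$ — all established in the introductory discussion.
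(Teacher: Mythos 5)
Your proof is correct and is essentially the paper's argument in a lightly repackaged form: the paper writes each summand of $c$ as $\sigma(a_i)a_i$ and replaces $a_i$ by $a_i/c$, which is exactly your step of multiplying each summand by $c^{-2}\in\hat\Delta$ since $\sigma(a_i/c)(a_i/c)=\sigma(a_i)a_i\cdot c^{-2}$ for $c\in K^\ast$. Your explicit handling of zero summands is a harmless extra precaution.
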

 
 \begin{proof}
 If $c =\sigma (a_1)a_1+\cdots +\sigma (a_n)a_n$ with $a_i\in K$, then $1/c = \sigma (a_1/c)a_1/c+\cdots +\sigma (a_n/c)a_n/c$.
 \end{proof}
 
 \begin{lemma}\label{a5}
For any $M\in M_{n,n}(L)$ we have $\Nm (M^\dagger )=\sigma (M)$.
\end{lemma}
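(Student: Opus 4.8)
The plan is to reduce the identity to two elementary observations about the form $\langle\ ,\ \rangle$, after which essentially no computation remains.

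First I would record the Hermitian symmetry of the form: for all $a,b\in L^n$ one has $\sigma(\langle a,b\rangle)=\langle b,a\rangle$. This is immediate from $\langle a,b\rangle=\sum_i\sigma(a_i)b_i$ together with $\sigma^2=\mathrm{id}$ and the fact that $\sigma$ is a ring homomorphism, since $\sigma\big(\sum_i\sigma(a_i)b_i\big)=\sum_i\sigma^2(a_i)\sigma(b_i)=\sum_i a_i\,\sigma(b_i)=\langle b,a\rangle$.

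Second I would invoke the adjunction identity $\langle u,Mv\rangle=\langle M^\dagger u,v\rangle$, which holds for every matrix and hence also with $M^\dagger$ in place of $M$, together with $(M^\dagger)^\dagger=M$; both facts are already in the excerpt. Applying the identity to the matrix $M^\dagger$ with $v=u$ gives $\langle u,M^\dagger u\rangle=\langle (M^\dagger)^\dagger u,u\rangle=\langle Mu,u\rangle$ for every $u\in L^n$, and combining this with the first observation yields $\langle u,M^\dagger u\rangle=\sigma(\langle u,Mu\rangle)$.

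It then remains only to take the union over $u\in C_n(1)$:
\[
\Nm(M^\dagger)=\{\langle u,M^\dagger u\rangle\mid u\in C_n(1)\}=\{\sigma(\langle u,Mu\rangle)\mid u\in C_n(1)\}=\sigma(\Nm(M)),
\]
where on the right $\sigma$ is applied elementwise to the subset $\Nm(M)\subseteq L$. I do not anticipate any genuine obstacle; the single point requiring care is that the adjunction identity, stated in the excerpt for a given $M$, may legitimately be applied to $M^\dagger$ because it holds for all matrices in $M_{n,n}(L)$. (This is the exact analogue, over a degree-$2$ Galois extension, of the classical fact $\Nm(M^\ast)_\CC=\overline{\Nm(M)_\CC}$.)
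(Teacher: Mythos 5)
Your proof is correct and follows essentially the same route as the paper: both rest on the adjunction identity $\langle u,Mv\rangle=\langle M^\dagger u,v\rangle$ together with the Hermitian symmetry $\sigma(\langle a,b\rangle)=\langle b,a\rangle$, applied with $v=u$ and then ranging over $u\in C_n(1)$. (The statement of the lemma contains a typo --- it should read $\Nm(M^\dagger)=\sigma(\Nm(M))$ --- and your proof establishes exactly this corrected claim.)
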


\begin{proof}
For any $u\in C_n(1)$ we have $\langle u,Mu\rangle = \langle M^\dagger u,u\rangle = \sigma (\langle u,M^\dagger u\rangle )$.
\end{proof}

 \begin{lemma}\label{i0.1}
Fix $u\in L^n$ and assume $\delta:= \langle u,u\rangle \ne 0$. There is $t\in L^\ast$ such that $\langle tu,tu\rangle =1$ if and only if $\delta \in \Delta$.
\end{lemma}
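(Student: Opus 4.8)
The whole statement reduces to a one-line computation with sesquilinearity plus the group structure of $\hat{\Delta}$ already recorded in the introduction. First I would compute, for any $t\in L^\ast$,
\[
\langle tu,tu\rangle \;=\; \sigma(t)\,\langle u,tu\rangle \;=\; \sigma(t)\,t\,\langle u,u\rangle \;=\; \mathrm{Norm}_{L/K}(t)\,\delta ,
\]
using $\langle cu,w\rangle = \sigma(c)\langle u,w\rangle$ and $\langle u,cw\rangle = c\langle u,w\rangle$. Hence the existence of $t\in L^\ast$ with $\langle tu,tu\rangle = 1$ is equivalent to the existence of $t\in L^\ast$ with $\mathrm{Norm}_{L/K}(t) = 1/\delta$, i.e.\ to the condition $1/\delta\in\hat{\Delta}$ (note $1/\delta$ makes sense since $\delta\ne 0$).

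For the "if" direction, assume $\delta\in\Delta$; since $\delta\ne 0$ this means $\delta\in\hat{\Delta}$, and as $\hat{\Delta}$ is a multiplicative group we get $1/\delta\in\hat{\Delta}$. Choosing $t\in L^\ast$ with $\sigma(t)t = 1/\delta$ yields $\langle tu,tu\rangle = (1/\delta)\delta = 1$. For the "only if" direction, given $t\in L^\ast$ with $\langle tu,tu\rangle = 1$, the displayed identity gives $\sigma(t)\,t\,\delta = 1$, so $\delta = \sigma(t^{-1})\,t^{-1} = \mathrm{Norm}_{L/K}(t^{-1})\in\Delta$.

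There is no real obstacle here; the only points requiring a (trivial) bit of care are that $\delta\ne 0$ is exactly what lets us pass between $\delta$ and $1/\delta$, that the witness $t$ must be chosen nonzero (automatic, since $\mathrm{Norm}_{L/K}(t)=1/\delta\ne 0$ forces $t\ne 0$), and the invocation of the fact — proved in the introduction — that $\hat{\Delta}$ is closed under inversion.
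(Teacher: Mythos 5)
Your proof is correct and follows essentially the same route as the paper's: compute $\langle tu,tu\rangle=\sigma(t)t\,\delta$ by sesquilinearity and then use that $\hat{\Delta}$ is closed under inversion to pass between $\delta$ and $1/\delta$. The only cosmetic difference is that you exhibit $\delta=\sigma(t^{-1})t^{-1}$ directly in the ``only if'' direction, where the paper cites its Remarks on $\Delta$ instead.
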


\begin{proof}
First assume the existence of $t\in  L^\ast$ such that $\langle tu,tu\rangle =1$. We have $\langle tu,tu\rangle =\sigma (t)t\delta$. Since $t\ne 0$, $\sigma (t)t\in \hat{\Delta}$. Remarks 
\ref{w1} and \ref{w2} give $\delta \in \Delta$. Now assume $\delta \in \Delta$. Since $\delta \ne 0$, we have $1/\delta \in \hat{\Delta}$ (Remark \ref{w2}). Write $1/\delta =\sigma (t)t$
for some $t\in L^\ast$. We have $\langle tu,tu\rangle =1$.
\end{proof}

\begin{remark}\label{i0.1.0}
Take $M\in M_{n,n}(L)$ with an eigenvector $v$ (say $Mv=cv$) such that $\langle v,v\rangle \in \hat{\Delta}$. Lemma \ref{i0.1} gives $c\in \Nm (M)$:
\end{remark}

\begin{lemma}\label{cz1}
Assume $\mathrm{char}(K)\ne 2$ and take $L =K(\sqrt{\alpha})$ with $\alpha$ not a square in $K$, but $\alpha$ the sum of two squares in $K$.
Then $0\in \hat{\Delta}_2$.
\end{lemma}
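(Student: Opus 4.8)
The plan is to produce one nonzero element $a\in K$ such that both $a$ and $-a$ lie in $\hat\Delta$, the image of the norm map with $0$ removed; then $0=a+(-a)$ is a sum of two elements of $\hat\Delta$, i.e. $0\in\hat\Delta_2$. (Equivalently, one invokes the remark recorded just before Proposition~\ref{i1}, that $0\in\hat\Delta_2$ iff some $a\in\hat\Delta$ has $-a\in\hat\Delta$.)

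First I would fix $p,q\in K$ with $\alpha=p^2+q^2$, available by hypothesis, and record the key preliminary point that $p\neq 0$ and $q\neq 0$: if $q=0$ then $\alpha=p^2$ would be a square in $K$, contrary to assumption, and symmetrically for $p$. Next, write $L=K(\beta)$ with $\beta^2=\alpha$, so $\sigma(\beta)=-\beta$. Then I would take $a:=q^2$. Since $q\in K\subseteq L$ we have $a=q\sigma(q)\in\Delta$ (this is just Remark~\ref{w1}), and $a\neq 0$ because $q\neq 0$, so $a\in\hat\Delta$. For $-a$: the element $p+\beta\in L^\ast$ has norm $\sigma(p+\beta)(p+\beta)=(p-\beta)(p+\beta)=p^2-\beta^2=p^2-\alpha=-q^2=-a$, so $-a\in\Delta$, and $-a\neq 0$, hence $-a\in\hat\Delta$. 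Therefore $a,-a\in\hat\Delta$ and $0=a+(-a)\in\hat\Delta_2$, as desired.

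There is essentially no obstacle beyond spotting the right element: the hypothesis ``$\alpha$ a sum of two squares but not a square'' unwinds to ``$\alpha=p^2+q^2$ with $p,q$ both nonzero'', and then the single identity $p^2-\alpha=-q^2$ simultaneously exhibits $-q^2=\mathrm{Norm}_{L/K}(p+\beta)$ while $q^2=\mathrm{Norm}_{L/K}(q)$. The only things to keep an eye on are that $\beta\neq 0$ and $\beta\notin K$ (guaranteed since $\alpha$ is a nonzero non-square) and that $q^2\neq -q^2$, which holds because $\mathrm{char}(K)\neq 2$ and $q\neq 0$.
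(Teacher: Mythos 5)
Your proof is correct and follows essentially the same route as the paper: write $\alpha=p^2+q^2$ with both summands nonzero, take $a:=q^2\in\hat\Delta$ and observe $-a=p^2-\alpha=\mathrm{Norm}_{L/K}(p+\beta)\in\hat\Delta$, so $0=a+(-a)\in\hat\Delta_2$. The only difference is that you explicitly justify $p,q\neq 0$ (which the paper asserts without comment), a worthwhile small addition.
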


\begin{proof}
Note that $0\in \hat{\Delta}_2$ if and only if there is $a\in \hat{\Delta}$ with $-a\in \hat{\Delta}$. Write $\alpha = u^2+v^2$ with $u,v\in K^\ast$. Take $a :=u^2 = -(v^2-\alpha )$.
\end{proof}

 \begin{lemma}\label{w0.1}
 Fix integers $n> m> n/2 >1$ and assume $\Delta _n = \Delta$. Let $M\subset L^n$ be an $m$-dimensional $L$-linear subspace. Then there
 are $f_1,\dots ,f_{3m-2n}\in M$ such that $\langle f_i,f_i\rangle =1$ for all $i$ and $\langle f_i,f_j\rangle =0$ for all $i\ne j$.
 \end{lemma}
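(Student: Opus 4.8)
The plan is to restrict $\langle\ ,\ \rangle$ to $M$, split off the radical to obtain a nondegenerate Hermitian $L$-subspace $N\subseteq M$ of dimension at least $2m-n$, take an orthogonal basis of $N$, and rescale each vector to have self-pairing $1$; the hypothesis $\Delta_n=\Delta$ is exactly what makes the last step possible.

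First I would observe that the form on $L^n$ is nondegenerate: since $\langle v,e_i\rangle=\sigma(v_i)$, the condition $\langle v,u\rangle=0$ for all $u$ forces $v=0$. By standard linear algebra it follows that $M^\perp:=\{v\in L^n\mid\langle v,u\rangle=0\ \forall u\in M\}$ has $\dim_L M^\perp=n-m$, so the radical $R:=M\cap M^\perp$ of $\langle\ ,\ \rangle|_M$ satisfies $r:=\dim_L R\le n-m$. Choose an $L$-linear complement $N$ of $R$ inside $M$; then $\dim_L N=m-r\ge 2m-n$, and since $n\ge m$ this is $\ge 3m-2n$. (If $3m-2n\le0$ there is nothing to prove, so assume $3m-2n\ge1$; in particular $N\ne0$.) Since radical vectors contribute nothing to the pairing, $\langle\ ,\ \rangle|_N$ coincides, under the $L$-isomorphism $N\cong M/R$, with the induced nondegenerate form on $M/R$, so $(N,\langle\ ,\ \rangle|_N)$ is a nondegenerate Hermitian $L$-space.

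Next I would produce an orthogonal basis $g_1,\dots,g_{m-r}$ of $N$. It is enough to know that a nonzero nondegenerate Hermitian space over $L$ contains an anisotropic vector, since then one splits off its span and inducts on the dimension. To find one: take $0\ne v\in N$; if $\langle v,v\rangle\ne0$ we are done, and otherwise nondegeneracy gives $v'$ with $a:=\langle v,v'\rangle\ne0$, and we may assume $\langle v',v'\rangle=0$, whereupon $\langle v+cv',v+cv'\rangle=ca+\sigma(ca)=\mathrm{Tr}_{L/K}(ca)$ for every $c\in L$; since $L/K$ is separable the trace is not identically $0$, so some $c$ makes $v+cv'$ anisotropic. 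This characteristic-free argument is, I expect, the only place requiring genuine care, because when $\mathrm{char}(K)=2$ one cannot reduce to a diagonal form by completing the square.

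Finally I would use the hypothesis. Each $\delta_i:=\langle g_i,g_i\rangle=\sum_{j=1}^n\sigma((g_i)_j)(g_i)_j$ is a sum of $n$ elements of $\Delta$, hence $\delta_i\in\Delta_n=\Delta$, and $\delta_i\ne0$ because $g_i$ is anisotropic; so Lemma \ref{i0.1} provides $t_i\in L^\ast$ with $\langle t_ig_i,t_ig_i\rangle=1$. Put $f_i:=t_ig_i\in N\subseteq M$. Rescaling preserves orthogonality, $\langle f_i,f_j\rangle=\sigma(t_i)t_j\langle g_i,g_j\rangle=0$ for $i\ne j$, so $f_1,\dots,f_{m-r}$ is an orthonormal family in $M$, and keeping its first $3m-2n$ members finishes the proof. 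The essential role of $\Delta_n=\Delta$ is precisely in this last step: a self-pairing of a vector of $L^n$ is automatically a sum of $n$ norms, so $\Delta_n=\Delta$ guarantees that every anisotropic vector can be normalized.
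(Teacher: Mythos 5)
Your proof is correct, and its second half (orthogonalize by repeatedly extracting an anisotropic vector, then rescale using $\Delta_n=\Delta$ and Lemma \ref{i0.1}) is exactly what the paper does. The two arguments differ in how they produce a subspace of $M$ on which the form is non-degenerate: you split off the radical $R=M\cap M^\perp$, bound $\dim R\le n-m$ via non-degeneracy of the ambient form, and work on a complement $N$ of dimension $\ge 2m-n$; the paper instead takes the $n\times n$ Gram matrix of a basis of $L^n$ extending a basis of $M$, argues that the leading $m\times m$ minor has rank at least $3m-2n$, and after permuting the basis restricts the form to the span $W$ of the first $3m-2n$ vectors, where the Gram matrix is non-singular. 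Your route buys two things. First, it actually proves a stronger statement, namely that one can find $2m-n$ (not just $3m-2n$) orthonormal vectors in $M$, since $2m-n\ge 3m-2n$ exactly when $n\ge m$. Second, you supply a step the paper leaves implicit: the existence of an anisotropic vector in a nonzero non-degenerate Hermitian space. The paper simply asserts ``there is $g_1\in W$ with $\langle g_1,g_1\rangle\ne 0$,'' whereas your trace computation $\langle v+cv',v+cv'\rangle=\mathrm{Tr}_{L/K}(ca)$, together with surjectivity of the trace for the separable extension $L/K$, justifies this uniformly, including in characteristic $2$ where completing the square is unavailable. You also correctly note that the statement is vacuous when $3m-2n\le 0$ (which the hypotheses $n>m>n/2$ do permit, e.g.\ $n=5$, $m=3$), a point the paper does not address.
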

 
 \begin{proof}
 Take any basis $u_1,\dots ,u_m$ of $M$ and complete it to a basis $u_1,\dots ,u_n$ of $L^n$. Since the sesquilinear form $\langle \ ,\ \rangle$ is non-degenerate,
 the matrix $E = (a_{ij})$ with $a_{ij} = \langle u_i,u_j\rangle$ has rank $n$. Hence among the first $m$ rows of $E$, at least $2m-n$ are linearly independent.
 Hence the $m\times m$ matrix $(a_{ij})$, $i, j=1,\dots ,m$, has rank at least $3m-2n$. Permuting $u_1,\dots ,u_m$ we may assume that the matrix $(a_{ij})$, $i,j=1,\dots ,3m-2n$,
 is non-singular. Let $W\subset M$ be the linear span of $u_1,\dots ,u_m$. Since the matrix $(a_{ij})$, $i,j=1,\dots ,3m-2n$,
 is non-singular, the restriction $\langle \ ,\ \rangle _W$ of $\langle \ ,\ \rangle$ to $W$ is non-degenerate. Hence there is $g_1\in W$ with $\langle g_1,g_1\rangle \ne 0$.
 Since $\Delta _n=\Delta$, there is $t\in L$ such that $\langle tg_1,tg_1\rangle = 1$ (Lemma \ref{i0.1}). Set $f_1:= tg_1$. If $3m-2n >1$ set $W_1:= \{w\in W\mid \langle f_1,w\rangle =0\}$. $W_1$ is a codimension $1$ linear subspace of $W$ and the restriction of $\langle \ ,\ \rangle $ to $W_1$ is non-degenerate. Therefore there is $g_2\in W_1$
 with $\langle g_2,g_2\rangle \ne 0$. Take $z\in L$ such that $\langle zg_2,zg_2\rangle =1$ (Lemma \ref{i0.1}) and set $f_2:= zg_2$. If $3m-2n>2$ set $W_2:= \{w\in W_1\mid \langle f_2,w\rangle =0\}$ and continue in the same way. \end{proof}

The definition of numerical range and of unitary direct sum immediately gives the following lemma.
\begin{lemma}\label{a1}
Fix integers $n>x>0$, $A\in M_{x,x}(L)$ and $B\in M_{n-x,n-x}(L)$. Set $M:= A\oplus B \in M_{n,n}(L)$ (unitary direct sum).
The set $\Nm (M)$ is the union of all points $a+b$ of $L$ with $a = \langle u,Bu\rangle$, $b= \langle v,Av\rangle$,
$u\in L^x$, $v\in L^{n-x}$ and $\langle u,u\rangle +\langle v,v\rangle =1$.
\end{lemma}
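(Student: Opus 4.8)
The plan is to unwind the two definitions involved. Split $L^n = L^x \times L^{n-x}$, writing $w = (v,u)$ with $v = (w_1,\dots ,w_x)\in L^x$ and $u = (w_{x+1},\dots ,w_n)\in L^{n-x}$. The first step is the bookkeeping identity that the sesquilinear form splits additively: straight from the formula $\langle w,w'\rangle = \sum_{i=1}^n \sigma(w_i)w_i'$ one reads off $\langle w,w'\rangle = \langle v,v'\rangle + \langle u,u'\rangle$, where on the right the two forms are the sesquilinear forms on $L^x$ and on $L^{n-x}$. In particular $\langle e_i,e_j\rangle = \delta_{ij}$, so the standard basis of $L^n$ is orthonormal and the decomposition $L^n = L^x \oplus L^{n-x}$ is orthogonal; this is exactly the meaning of the unitary direct sum $M = A\oplus B$.

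The second step is that $M = A\oplus B$ is block diagonal with respect to this splitting, so $Mw = (Av, Bu)$ for $w=(v,u)$. Combining the two steps, for every $w = (v,u)\in L^n$ we have $\langle w,w\rangle = \langle v,v\rangle + \langle u,u\rangle$ and $\langle w,Mw\rangle = \langle v,Av\rangle + \langle u,Bu\rangle$.

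The final step is to take the union over $C_n(1)$. A vector $w = (v,u)$ lies in $C_n(1)$ exactly when $\langle v,v\rangle + \langle u,u\rangle = 1$, and then $\langle w,Mw\rangle = \langle v,Av\rangle + \langle u,Bu\rangle$; conversely any pair $(v,u)$ with $\langle v,v\rangle + \langle u,u\rangle = 1$ comes from an honest $w\in C_n(1)$. Hence $\Nm(M)$ is the set of all $b+a$ with $b = \langle v,Av\rangle$, $a = \langle u,Bu\rangle$, $v\in L^x$, $u\in L^{n-x}$, and $\langle v,v\rangle + \langle u,u\rangle = 1$, which is the asserted description (up to the evident transposition of the roles of $(u,A)$ and $(v,B)$ in the statement). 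There is no genuine obstacle here: the whole content is the additivity of the form over the orthogonal splitting together with block-diagonality of $M$, and the only point that deserves a moment's thought is that the orthogonality implicit in the phrase ``unitary direct sum'' coincides with the one visible in the coordinate splitting, which holds because $\langle e_i,e_j\rangle = \delta_{ij}$.
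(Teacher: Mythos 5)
Your proof is correct and matches the paper's treatment: the paper gives no written proof, stating only that the lemma is immediate from the definitions of the numerical range and the unitary direct sum, and your unwinding of those definitions (additivity of the form over the orthogonal coordinate splitting plus block-diagonality of $M$) is exactly that immediate argument. Your parenthetical note that the statement's pairing of $u$ with $B$ and $v$ with $A$ is a harmless transposition of roles is also right.
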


When $\Delta = \Delta _n$ we may improve Lemma \ref{a1} in the following way. 

\begin{proposition}\label{a2}
Fix integers $n>x>0$, $A\in M_{x,x}(L)$ and $B\in M_{n-x,n-x}(L)$. Set $M:= A\oplus B \in M_{n,n}(L)$ (unitary direct sum). Assume
$\Delta =\Delta _x =\Delta _{n-x}$.
Then $\Nm (M)$ is the union of  $\{\Nm _0(A) +\Nm (B)\}\cup \{\Nm (A) +\Nm _0(B)\}$ and all $tc+(1-t)d$ with $t\in \hat{\Delta}\cap (1-\hat{\Delta})$,
$c\in \Nm (A)$ and $d\in \Nm (B)$.
\end{proposition}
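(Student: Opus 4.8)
The plan is to start from the description in Lemma~\ref{a1}: every element of $\Nm(M)$ has the form $a+b$ with $a=\langle u,Bu\rangle$, $b=\langle v,Av\rangle$, $u\in L^{n-x}$, $v\in L^x$, and $\langle u,u\rangle+\langle v,v\rangle=1$. Write $s:=\langle v,v\rangle$ and $t:=\langle u,u\rangle$, so $s+t=1$ with $s\in\Delta_x=\Delta$ and $t\in\Delta_{n-x}=\Delta$. First I would split into cases according to whether $s$ and $t$ vanish. If $s=0$ then $t=1$; then $b\in\Nm_0(A)$ and, since $\langle u,u\rangle=1$, $a\in\Nm(B)$, giving a point of $\Nm_0(A)+\Nm(B)$. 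Symmetrically $t=0$ gives a point of $\Nm(A)+\Nm_0(B)$. The remaining case is $s,t\in\hat\Delta$ with $s+t=1$, i.e. $t\in\hat\Delta\cap(1-\hat\Delta)$ and $s=1-t$.

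In that remaining case the key step is to rescale. Since $s=\langle v,v\rangle\in\hat\Delta$, Lemma~\ref{i0.1} (using $\Delta_x=\Delta$, so that $s\in\Delta$) produces $\lambda\in L^\ast$ with $\langle \lambda v,\lambda v\rangle=1$; then $b=\langle v,Av\rangle = \sigma(\lambda)\lambda\cdot\langle \lambda v,\lambda v\rangle^{-1}\langle v,Av\rangle$, and more usefully $\langle v,Av\rangle = \sigma(\lambda^{-1})\lambda^{-1}\langle \lambda v, A\lambda v\rangle = s\,d$ where $d:=\langle \lambda v,A\lambda v\rangle\in\Nm(A)$ and $\sigma(\lambda^{-1})\lambda^{-1} = (\sigma(\lambda)\lambda)^{-1}$; but $\sigma(\lambda)\lambda\cdot s = \langle\lambda v,\lambda v\rangle = 1$, so $\sigma(\lambda^{-1})\lambda^{-1}=s$. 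Hence $b=(1-t)d$ with $d\in\Nm(A)$. Identically, $a=\langle u,Bu\rangle = t\,c$ with $c\in\Nm(B)$. Therefore $a+b = tc+(1-t)d$ with $t\in\hat\Delta\cap(1-\hat\Delta)$, $c\in\Nm(B)$, $d\in\Nm(A)$, which is exactly a point of the third set in the statement. This shows $\Nm(M)$ is contained in the stated union.

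For the reverse inclusion I would simply reverse each construction. Given $b'\in\Nm_0(A)$ and any point of $\Nm(B)$ realised by some $u$ with $\langle u,u\rangle=1$, take $v$ with $\langle v,v\rangle=0$ and $\langle v,Av\rangle=b'$; then $\langle u,u\rangle+\langle v,v\rangle=1$ and Lemma~\ref{a1} gives $b'+\langle u,Bu\rangle\in\Nm(M)$, covering $\Nm_0(A)+\Nm(B)$; symmetrically for $\Nm(A)+\Nm_0(B)$. Given $t\in\hat\Delta\cap(1-\hat\Delta)$, $c\in\Nm(B)$, $d\in\Nm(A)$, pick $u_0,v_0$ realising $c,d$ with $\langle u_0,u_0\rangle=\langle v_0,v_0\rangle=1$, then use Remark~\ref{w0} (or Lemma~\ref{i0.1}): since $t\in\hat\Delta$ choose $\mu$ with $\sigma(\mu)\mu=t$ and set $u:=\mu u_0$, and since $1-t\in\hat\Delta$ choose $\nu$ with $\sigma(\nu)\nu=1-t$ and set $v:=\nu v_0$; then $\langle u,u\rangle+\langle v,v\rangle=t+(1-t)=1$, while $\langle u,Bu\rangle=tc$ and $\langle v,Av\rangle=(1-t)d$, so $tc+(1-t)d\in\Nm(M)$ by Lemma~\ref{a1}.

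I expect the only real subtlety is bookkeeping the sesquilinearity when rescaling — keeping straight that $\langle cu,cu\rangle=\sigma(c)c\langle u,u\rangle$ and that $\sigma(\lambda)\lambda$ lands in $\hat\Delta$ precisely so that Lemma~\ref{i0.1} and Remark~\ref{w0} apply — together with making sure the hypothesis $\Delta=\Delta_x=\Delta_{n-x}$ is invoked exactly where a value of the form $\langle v,v\rangle\in\Delta_x$ must be recognised as lying in $\Delta$ so that it is a genuine norm. There is no genuine geometric obstacle; the proposition is essentially a repackaging of Lemma~\ref{a1} once one observes that under the equal-$\Delta$ hypothesis every nonzero self-pairing is a norm and hence can be normalised to $1$, which converts the additive constraint $s+t=1$ into the convex-type combination $tc+(1-t)d$.
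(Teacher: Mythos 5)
Your proof is correct and follows essentially the same route as the paper's: split according to whether $\langle u,u\rangle$ is $0$, $1$, or in $\hat{\Delta}\setminus\{1\}$, use the hypothesis $\Delta=\Delta_x=\Delta_{n-x}$ to recognise the nonzero self-pairings as norms, and rescale by an element of norm $1/t$ (resp. $1/(1-t)$) to rewrite each summand as $t$ (resp. $1-t$) times an element of the relevant numerical range. The only difference is that you spell out the reverse inclusion explicitly, where the paper just says the same argument runs backwards.
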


\begin{proof}
Fix $u\in L^x$, $v\in L^{n-x}$ with $\langle u,u\rangle +\langle v,v\rangle =1$. Set $t = \langle u,u\rangle$. Hence $\langle v,v\rangle =1-t$. If
$t=0$ (resp. $t=1$), then $1-t =1$ (resp. $1-t=0$) and hence $\{\Nm (A)_0 +\Nm (B)\}\cup \{\Nm (A) +\Nm (B)_0\} \subseteq \Nm (M)$. Now assume
$t\in \Delta \setminus \{0,1\}$. Since $\Delta = \Delta _x =\Delta _{n-x}$, we have $t\in \hat{\Delta}\cap (1-\hat{\Delta})$.
Since $\{t, 1- t\}\subset \hat{\Delta}$,  there are $c,d\in L^\ast$ with $\sigma ({c})c =1/t$ and $\sigma (d)d=1/(1-t)$.
Set $\alpha := \langle u,Au\rangle$ and $\beta := \langle u,Bu\rangle$. 
We have $\langle cu,cu\rangle = \langle dv,dv\rangle =1$ and hence  $\langle cu,cAu\rangle \in \Nm (A)$ and $\langle dv,dBv\rangle \in Nm (B)$.
Thus $\alpha /t \in \Nm (A)$ and $\beta /(1-t)\in \Nm (B)$. We get $\langle u+v,M(u+v)\rangle = tx+(1-t)y$ with $x\in \Nm (A)$ and $y\in \Nm (B)$.
The same proof done backwards gives the other inclusion.
\end{proof}

Proposition \ref{a2} is analogous to \cite[Proposition 3.1]{cjklr}. Fix $c,d\in L$. The set  of all $tc+(1-t)d$ with $t\in (\hat{\Delta}\cap (1-\hat{\Delta}))$ is called in \cite{cjklr} the \emph{open segment} with $c$ and $d$ as its boundary points
and we denote it with $((c;d))$.
When (as in the case of $\mathrm{char}(L)=0$) the set $(\hat{\Delta}\cap (1-\hat{\Delta}))$ is non-empty (Lemma \ref{a4.1}) we have $((c;c)) =\{c\}$ for all $c\in L$.

\begin{lemma}\label{a4.1}
Assume $\mathrm{char}(K)=0$. Then $\hat{\Delta}\cap (1-\hat{\Delta})$ is infinite.
\end{lemma}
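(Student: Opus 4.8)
The plan is to show that $\hat{\Delta}\cap(1-\hat{\Delta})$ contains infinitely many rational numbers, which suffices since $\QQ\subseteq K$ in characteristic $0$. The key observation is that $\hat{\Delta}$ is a multiplicative group (stated in the introduction) that contains all nonzero squares of $K$, in particular all squares of nonzero rationals, and $\hat{\Delta}$ is closed under addition of any two of its elements only in the good cases — but here we only need closure facts we actually have: $\hat{\Delta}$ contains $1$ and is closed under multiplication and inversion.

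First I would reduce to finding infinitely many $t$ with both $t$ and $1-t$ nonzero squares in $K$ (equivalently in $\QQ$, using $1,4,9,\dots$ is not enough since $1-n^2<0$ in general, so one must be more careful). The clean trick: parametrize Pythagorean-type identities. For any nonzero rational $r$, set
\[
t := \frac{(1-r)^2}{(1+r)^2} \cdot \text{(something)}
\]
Actually the simplest route: take $t = s^2$ where $s\in\QQ$, $0<s<1$; then $t\in\hat\Delta$ automatically, and we need $1-s^2 = (1-s)(1+s)$ to lie in $\hat\Delta$. Since $1-s$ and $1+s$ need not individually be squares, instead use the substitution coming from rational points on the circle: for $m\in\QQ^\ast$ put
\[
t = \left(\frac{2m}{1+m^2}\right)^2, \qquad 1-t = \left(\frac{1-m^2}{1+m^2}\right)^2,
\]
which is the classical identity $(2m)^2 + (1-m^2)^2 = (1+m^2)^2$. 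Then both $t$ and $1-t$ are squares of nonzero rationals as long as $m\notin\{0,1,-1\}$, hence both lie in $\hat\Delta$. Letting $m$ range over $\QQ\setminus\{0,1,-1\}$ produces infinitely many distinct values of $t$ (the map $m\mapsto t$ takes each value only finitely often), so $\hat{\Delta}\cap(1-\hat{\Delta})$ is infinite.

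I would organize the write-up as: (1) recall $\QQ\subseteq K$ since $\op{char}(K)=0$, and that every nonzero square in $K$ lies in $\hat{\Delta}$ (Remark \ref{w1} together with $0\notin$ the set of the relevant squares); (2) verify the algebraic identity $(2m)^2+(1-m^2)^2=(1+m^2)^2$ and divide through by $(1+m^2)^2$; (3) conclude $t=(2m/(1+m^2))^2\in\hat\Delta$ and $1-t=((1-m^2)/(1+m^2))^2\in\hat\Delta$ for $m\in\QQ$, $m\neq 0,\pm1$; (4) observe that $m\mapsto t$ is nonconstant with finite fibers, so its image is infinite, giving infinitely many elements of $\hat{\Delta}\cap(1-\hat{\Delta})$.

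There is essentially no hard part here — the only thing to be a little careful about is excluding the degenerate values $m=0$ (gives $t=0$) and $m=\pm1$ (gives $t=1$, so $1-t=0$), since $\hat\Delta$ by definition excludes $0$; and to check that infinitely many distinct $t$ arise, for which it is enough to note $t$ is a non-constant rational function of $m$ over the infinite field $\QQ$, hence assumes infinitely many values.
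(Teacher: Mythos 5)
Your proof is correct, and it takes a genuinely different and far more elementary route than the paper. The identity $(2m)^2+(1-m^2)^2=(1+m^2)^2$ does give $t=4m^2/(1+m^2)^2$ and $1-t=(1-m^2)^2/(1+m^2)^2$, both nonzero squares in $\QQ\subseteq K$ for $m\in\QQ\setminus\{0,\pm1\}$, hence both in $\hat{\Delta}$ since $\Delta$ contains all squares of $K$ and $\hat{\Delta}=\Delta\setminus\{0\}$; and $m\mapsto t$ has fibers of size at most $4$, so infinitely many distinct $t$ arise. The paper instead studies the quadric threefold $E\subset\PP^4$ with equation $x^2+w^2-\alpha(y^2+z^2)-t^2=0$, whose affine $K$-points parametrize pairs of norms summing to $1$; it produces infinitely many $K$-points by linear projection from a smooth $K$-point, and then rules out finiteness of $\Delta\cap(1-\Delta)$ by a dimension count (a $3$-dimensional quadric cannot be a finite union of products of two conics), carried out after embedding $L$ into $\CC$ or, via Steinitz, into $\Rr(i)$ for a real closed field $\Rr$, and using density in the euclidean topology. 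Your argument avoids all of this machinery (real closed fields, density arguments, the Steinitz theorem) at the cost of only exhibiting elements of $\hat{\Delta}\cap(1-\hat{\Delta})$ that are rational squares; since the lemma only asserts infinitude, that is entirely sufficient, and your version has the added feature of being uniform in $L$ and $\alpha$. The only points to be careful about in the write-up are the ones you already flag: excluding $m=0,\pm1$ so that neither $t$ nor $1-t$ is $0$, and noting $1+m^2\neq 0$ in $\QQ$.
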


\begin{proof}
For any $\delta \in \Delta$ there are $x, y$ in $K$ such that $x^2-\alpha y^2 = \delta$. Note that $1-\delta \in \Delta$ if and only if there are $w,z\in K$ such that $1-\delta = w^2-\alpha z^2$. Take coordinates $(x,y,w,z)$ on $K^4$. Set $T:= \{(x,y,w,z)\in K^4\mid x^2+w^2-\alpha (y^2+z^2)=1\}$. We take homogeneous coordinates $x,y,w,z,t$ in $\PP^4(K)$ with $H_\infty
=\{t=0\}$ and $K^4=\PP^4(K)\setminus H_\infty$.
Let $E\subset \PP^4(K)$ the projective quadric with equation $\{x^2+w^2-\alpha (y^2+z^2) -t^2=0\}$. We have $E\setminus E\cap H_\infty =T$. Since $\mathrm{char}(K)\ne 2$
and $\alpha \ne 0$, taking the partial derivatives of the polynomial $x^2+w^2-\alpha (y^2+z^2) -t^2$ we get that
the point $O:= (1:0:0:0:1)$ is a smooth point of $T$. Let $M\subset \PP^4$ be the hyperplane with equation $x-t =0$. Note that $M$ is the tangent space to $E$ at $O$.
Hence $E\cap M$ is a quadric cone of $M$, which is the union of all lines of $\PP^4$ contained in $E$ and passing through $O$. Let $H\subset \PP^4(K)$ be any hyperplane defined over $K$ and with $O\notin H$. The latter condition implies $H\ne M$ and hence $N:= H\cap M$ is a codimension two linear subspace of $\PP^4$. Let $\ell : \PP^4\setminus \{O\}\to H$
denote the linear projection from $O$. The morphism $\ell$ is defined over $K$, because $O$ and $H$ are defined over $K$. Hence for each $P\in H(K)$
the line $L(O,P)$ spanned by $O$ and $P$ is defined over $K$. Since $O\in T$, the intersection $T\cap L(O,P)$ is either $O$ with multiplicity $2$ or the entire line $L(O,P)$
or the union of $O$ and another point $O_P\in E$ defined over $K$. The first two cases imply $L(O,P)\subset M$. Since $O\in M$ and $O\notin H$, we have $L(O,P)\subset M$
if and only if  $P\in N$. Hence whenever we take $P\in H\setminus N$ the point $O_P\in E\setminus \{O\}$ is defined over $K$. Since $H\setminus N$ is $3$-dimensional
affine space over $K$, we get that $E$ is infinite. $E\setminus T = E\cap H_\infty$. We have
$O\notin H_\infty$ and hence $O\notin H_\infty \cap E$. Thus $\ell (H_\infty \cap E)$ is a quadric hypersurface of $M$.
If $P\in M\setminus (\ell (H_\infty \cap E))$, then $O_P\in T$. $\ell (H_\infty \cap E)\cup N$ is the union of a quadric and a hyperplane of $M$. Since $K$ is infinite, the Grassmannian of all lines of $M(\overline{L})$
defined over $K$ is Zariski dense in the Grassmannian of all $\overline{L}$-lines of $M(\overline{L})$. Since $K$ is infinite, restricting to lines
defined over $K$ and contained neither in $\ell (H_\infty \cap E)$ nor in $N$ we get that $M\setminus (N\cup \ell (E\cap H_\infty))$ is infinite. Hence $E$ is infinite.

\quad (a) Assume that $L$ has a field embedding $j: L\hookrightarrow \CC$. We omit $j$ and hence see $L$ as a subfield of $\CC$. 

First assume that $K$ is dense in $\CC$ with respect to the euclidean topology. Hence $K^4$ (resp. $\PP^4(K)$) is dense in $\CC ^n$ (resp. $\PP^4(\CC)$) for the euclidean topology.
The topological space $N(\CC )$ is the closure of $N$ in $N(\CC)$ with respect to the euclidean topology. Since $E\cap H_\infty$ has corank $1$ with vertex $O\in \PP^4(K)$, the closure
of $E\cap H_\infty$ in the euclidean topology contains a neighborhood of $O$ in $(E\cap H_\infty )(\CC)$. Since $(E\cap H_\infty )(\CC)$ is a cone
with vertex $O$, it is the closure of $E\cap H_\infty$ for the euclidean topology and $(\ell (H_\infty \cap E))(\CC)$ is the closure of $\ell (H_\infty \cap E)$
for the euclidean topology. $\ell (H_\infty \cap E)\cup N$ is the union of a quadric and a hyperplane of $M$. We get that $E(\CC)$ is the closure of $E$ with respect to the euclidean topology. $\hat{\Delta}\cap (1-\hat{\Delta})$ is infinite if and only if $\Delta \cap (1-\Delta )$ is infinite. Assume that  $\Delta \cap (1-\Delta )$ is finite, say  $\Delta \cap (1-\Delta ) =\{a_1,\dots ,a_s\}$ with $a_i\in K$. Set $G_i:= C(a_i,0)$ and $F_i:= C(1-a_i,0)$. We get $E = \cup _{i=1}^{s} G_i\times F_i$. Hence $ \cup _{i=1}^{s} G_i(\CC )\times F_i(\CC)$ is
dense in $E(\CC)$ for the euclidean topology. Since $E(\CC )$ has complex dimension $3$, while each $G_i(\CC )\times F_i(\CC)$ has complex dimension $2$, we get
a contradiction. 

Now assume that $K$ is not dense in $\CC$ for the euclidean topology. Since $\QQ$ is dense in $\RR$ for the euclidean topology, the closure
of $K$ for the euclidean topology contains $\RR$. Since this closure is a field, $\RR$ is the closure of $K$ for the euclidean topology. We use $E(\RR )$ instead of $E(\CC )$.
Since $O$ is a smooth point of $E$ and $O\in E(\RR )$, $E(\RR )$ is a non-empty topological manifold of dimension $3$. Hence $E(\RR )$ cannot be the union of finitely
many topological $2$-manifolds $G_i(\RR )\times F_i(\RR )$.

\quad (b) By a theorem of Steinitz two algebraically closed fields with characteristic zero are isomorphic if and only if they have transcendental basis over $\QQ$
with the same cardinality (\cite[Theorem VIII.1.1]{l}, \cite[page 125]{s}). There are real closed field with a transcendental basis over $\QQ$ with arbitrary cardinality (use
that every ordered field has a real closure (\cite[Theorem 1.3.2]{bcr}) and give an ordering of $\QQ ({t_\alpha}) _{\alpha \in \Gamma}$ with $\Gamma$ a well-order set and $t_\alpha$ bigger than any rational function in the variable $t_\gamma$, $\gamma <\alpha$) and for any real closed field $\Rr$ the
field $\Rr (i)$ is algebraically closed (\cite[Theorem 1.2.2]{bcr}). Hence there is an embedding $j: L\hookrightarrow \Rr (i)$ for some real closed field $\Rr$. The euclidean topology on $\Rr^n$ is the topology for which open balls form a basis of open subsets (\cite[Definition 2.19]{bcr}). The field $\Cc:= \Rr (i)$ inherits the euclidean topology.  The sets $\Rr^n$, $\Cc ^n$, $T(\Rr)$, $T(\Cc )$, $\PP^r(\Rr)$, $\PP^r(\Cc )$, $E(\Rr)$ and $E(\Cc)$, have the euclidean topology. Repeat the proof in step (a) with $\Rr$ and $\Cc$ instead of $\RR$
and $\CC$.
\end{proof}

\begin{remark}\label{a4.1.1}
Assume $\mathrm{char}(K) =0$. Lemma \ref{a4.1} says that $C_2(1)$ is infinite. Hence $C_n(1)$ is infinite for all $n\ge 2$.
\end{remark}

We recall that a field $F$ is said to be \emph{formally real} if $-1$ is not a sum of squares of elements of $F$. If $F$ is formally real, then $\mathrm{char}(F)=0$.

\begin{proposition}\label{zc2}
Assume that $K$ is formally real, but that $L$ is not formally real. Then $0\notin \hat{\Delta}_n$ for any $n>1$.
\end{proposition}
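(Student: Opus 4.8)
The plan is to prove the stronger statement that, under these hypotheses, \emph{every} element of $\Delta$ is a sum of squares in $K$. Since a sum of squares in a formally real field is $0$ only when every summand is $0$, this at once forbids any relation $0=a_1+\cdots+a_n$ with all $a_i\in\hat{\Delta}$, which is exactly the assertion $0\notin\hat{\Delta}_n$.

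Since $K$ is formally real we have $\mathrm{char}(K)=0$, so (as in Section \ref{Sc}) we may write $L=K(\sqrt{\alpha})$ with $\alpha\in K^\ast$ not a square, and then $\Delta=\{x^2-\alpha y^2\mid x,y\in K\}$. The key step is to show that $-\alpha$ is a sum of squares in $K$. Because $L$ is not formally real, write $-1=\sum_j\gamma_j^2$ with $\gamma_j\in L$, and put $\gamma_j=p_j+q_j\sqrt{\alpha}$ with $p_j,q_j\in K$. Expanding $\gamma_j^2=p_j^2+\alpha q_j^2+2p_jq_j\sqrt{\alpha}$ and taking the coefficient of $1$ in the $K$-basis $\{1,\sqrt{\alpha}\}$ of the identity $-1=\sum_j\gamma_j^2$ gives $-1=\sum_j p_j^2+\alpha\sum_j q_j^2$. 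Put $t:=\sum_j q_j^2$. If $t=0$ then $-1=\sum_j p_j^2$ in $K$, contradicting that $K$ is formally real, so $t\neq 0$. Hence $-\alpha t=1+\sum_j p_j^2$, and therefore $-\alpha=(1+\sum_j p_j^2)\,t^{-1}=(1+\sum_j p_j^2)\,t\,(t^{-1})^2$. A product of sums of squares is again a sum of squares, so $-\alpha$ is a sum of squares in $K$, as claimed.

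Now finish as follows. For any $x,y\in K$ the element $x^2-\alpha y^2=x^2+(-\alpha)y^2$ is a sum of squares in $K$, since $-\alpha$ is; hence every element of $\Delta$ — in particular every element of $\hat{\Delta}$ — is a sum of squares in $K$. Suppose $0\in\hat{\Delta}_n$ for some $n>1$, say $0=a_1+\cdots+a_n$ with each $a_i\in\hat{\Delta}$. Writing each $a_i$ as a sum of squares $a_i=\sum_k b_{ik}^2$ and collecting, we get a relation $\sum_{i,k}b_{ik}^2=0$ in $K$; as $K$ is formally real this forces every $b_{ik}=0$, hence every $a_i=0$, contradicting $a_i\in\hat{\Delta}=\Delta\setminus\{0\}$. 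Thus $0\notin\hat{\Delta}_n$ for every $n>1$.

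The only substantive point is the middle paragraph, exhibiting $-\alpha$ as a sum of squares; and the one place that needs care there is discarding the degenerate case $\sum_j q_j^2=0$, which is precisely where formal reality of $K$ (not merely of $L$) enters. Everything else is the elementary arithmetic of sums of squares and the defining property of a formally real field.
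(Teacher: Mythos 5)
Your proof is correct, and it takes a genuinely different route from the paper's. The paper's argument is order-theoretic: citing Artin--Schreier theory (\cite[Theorem 1.1.8 and Lemma 1.1.7]{bcr}), it produces an ordering $\le$ on $K$ in which $\alpha<0$, whence every norm $\sigma(z)z=x^2-\alpha y^2$ of a nonzero $z$ is strictly positive, so a sum of $n$ elements of $\hat{\Delta}$ is positive and in particular nonzero. You instead stay entirely inside the arithmetic of sums of squares: expanding $-1=\sum_j\gamma_j^2$ in the basis $\{1,\sqrt{\alpha}\}$ and using formal reality of $K$ to rule out $\sum_j q_j^2=0$, you exhibit $-\alpha$ explicitly as a sum of squares in $K$, conclude that every element of $\Delta$ is a sum of squares, and then invoke the fact that in a formally real field a sum of squares vanishes only if all summands do. The two arguments are of course two sides of the same coin (the totally positive elements of a formally real field are exactly the nonzero sums of squares), but yours is self-contained and avoids importing the existence and extension theory of orderings, at the cost of being a bit longer; the paper's is a one-liner once the cited results are granted, and as a bonus it shows the stronger statement that $\hat{\Delta}_n$ lies in the positive cone of an ordering. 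All the steps you flag as needing care (the nonvanishing of $t=\sum_j q_j^2$, and the identity $(\sum a_i^2)(\sum b_j^2)=\sum_{i,j}(a_ib_j)^2$ used to see that products of sums of squares are sums of squares) are handled correctly.
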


\begin{proof}
Write $L = K(\sqrt{\alpha})$ for some $\alpha \in K$. Since $K$ is formally real, but $L$ is not formally real, there is an ordering $\le$ on $K$ with $\alpha <0$
(\cite[Theorem 1.1.8 and Lemma 1.1.7]{bcr}).
Take $z = x+y\alpha \in L$ with $x, y\in K$ and $(x,y)\ne (0,0)$. Since $\sigma (z)z = x^2-\alpha y^2 >0$, $a>0$ for every $a\in \hat{\Delta}$. Thus $b>0$ for
every $b\in \hat{\Delta}_n$.
\end{proof}

\begin{example}\label{w4.00}
Here we give a simple example with $\Delta _2\ne \Delta$ and $0\notin \hat{\Delta}$. Take $K=\QQ$ and $L:= \QQ (i)$. For any $z=x+iy\in L$ we have $z\sigma (z) = x^2+y^2$. Hence
$\Delta$ is the subset of $\QQ _{\ge 0}$ formed by the sums of two squares. Every positive integer is the sum of $4$ squares by a theorem of Lagrange and
hence $\Delta _2 =\QQ _{\ge 0}$. There are positive rational numbers, which are not the sums of $2$ or $3$ squares, e.g. $7$ is not the sum of $3$
squares of rational numbers, because for any $n\in \{1,2,3\}$ a positive integer is the sum of $n$ squares of rational numbers if and only if it is the sum of $n$ squares of 
integers (\cite[Ch. 7]{Lev}). Proposition \ref{zc2} gives $0\notin \hat{\Delta}$.
\end{example}

\begin{example}\label{ccc1}
Assume $\mathrm{char}(K) =0$, i.e. assume $K\supseteq \QQ$. We have $\Delta _n \supseteq \QQ _{\ge 0}$ for every $n\ge 4$, because every non-negative integer
is the sum of $4$ integers by a theorem of Lagrange and $\Delta _n\setminus \{0\}$ is a multiplicative group.
\end{example}

\begin{lemma}\label{ccc2}
Assume $\mathrm{char}(K) =0$. Then there are infinitely many $m\in \Delta$ such that $1-m$ is a square in $K$ and $m= \sigma (z)z$ with $z\in L\setminus (K\cup K\sqrt{\alpha})$.
\end{lemma}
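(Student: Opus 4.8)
The plan is to realize the desired $m$'s as $\sigma(z)z$ for an explicit one‑parameter family of $z\in L$. Since $\mathrm{char}(K)=0$ the extension $L/K$ is separable of degree $2$, so $L=K(\sqrt{\alpha})$ for some non‑square $\alpha\in K$ and, writing $z=x+y\sqrt{\alpha}$ with $x,y\in K$, we have $\sigma(z)z=x^2-\alpha y^2$. Thus $z\in L\setminus(K\cup K\sqrt{\alpha})$ means exactly $x\neq 0$ and $y\neq 0$, while the requirement that $1-m$ be a square in $K$ for $m=x^2-\alpha y^2$ amounts to finding $s\in K$ with $x^2-\alpha y^2+s^2=1$. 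Hence it suffices to produce infinitely many $K$‑points $(x,y,s)$ of the affine quadric $Q\colon x^2-\alpha y^2+s^2=1$ with $xy\neq 0$, on which the function $m=x^2-\alpha y^2=1-s^2$ takes infinitely many distinct values.

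Concretely, I would slice $Q$ by the lines through its $K$‑point $(1,0,0)$: for $c\in K$ substitute $(x,y,s)=(1+t,\,t,\,ct)$ into the equation of $Q$, obtaining $t\bigl(2+D(c)\,t\bigr)=0$ with $D(c):=c^2+1-\alpha$, and thus the second intersection point with parameter $t=-2/D(c)$, i.e. $x(c)=(c^2-1-\alpha)/D(c)$, $y(c)=-2/D(c)$, $s(c)=-2c/D(c)$, valid whenever $D(c)\neq 0$. Here $y(c)\neq 0$ always, and $x(c)=0$ only when $c^2=1+\alpha$; so apart from the at most four values of $c$ with $D(c)=0$ or $x(c)=0$ we get a genuine $z(c)=x(c)+y(c)\sqrt{\alpha}\in L\setminus(K\cup K\sqrt{\alpha})$ with $m(c):=\sigma(z(c))z(c)=1-s(c)^2\in\Delta$ and $1-m(c)=s(c)^2$ a square in $K$.

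It remains to see that $m(c)$ takes infinitely many values. This is the one point needing care: infinitely many $K$‑points of $Q$ would not suffice, since for a single nonzero value $m_0$ the conic $x^2-\alpha y^2=m_0$ already has infinitely many $K$‑points (Lemma \ref{c1}), so the family could a priori be ``constant''. But $s(c)^2=4c^2/(c^2+1-\alpha)^2$ is a non‑constant rational function of $c$: if it equalled a constant $\kappa$, then $4c^2=\kappa(c^2+1-\alpha)^2$ as polynomials in $c$ (they agree at the infinitely many $c\in K$ with $D(c)\neq 0$), which is impossible by comparing degrees ($2$ versus $4$ when $\kappa\neq 0$, and $\kappa=0$ forces $c\equiv 0$). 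Since $K$ is infinite (as $\mathrm{char}(K)=0$), a non‑constant rational function takes infinitely many values on $K$, and discarding the finitely many excluded $c$ still leaves infinitely many values of $m(c)$. This produces infinitely many $m$ with all the required properties; the main obstacle is exactly the non‑constancy check just described, everything else being the routine line–conic computation.
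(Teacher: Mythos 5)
Your proof is correct and follows essentially the same route as the paper's: parametrize the relevant quadric $x^2-\alpha y^2+s^2=1$ by the pencil of lines through a rational point and observe that the resulting value of $m$ varies. Your version is in fact more careful than the printed one --- the paper's displayed equation $x^2+w^2=\alpha y^2$ appears to be a misprint for this quadric (as written it would give $1-m=1+w^2$, not a square, and may even lack nontrivial solutions), and your explicit non-constancy check of $s(c)^2=4c^2/D(c)^2$ supplies the step the paper only asserts by analogy with Lemma \ref{a4.1}.
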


\begin{proof}
Take $z = x+y\sqrt{\alpha}\in L^2$ with $x,y\in K$. Consider the equation in $K^3$:
\begin{equation}\label{eqccc1}
x^2+w^2 = \alpha y^2
\end{equation}
As the proof of Lemma \ref{a4.1} we get that (\ref{eqccc1}) has infinitely many solutions $(x,y,w)\in K^3$ with $y\ne 0$ and $x\ne 0$ and that the set of these
solutions has infinite image under the projection $K^3\to K$ onto the third coordinate.
\end{proof}

\begin{example}\label{2c-1}
Assume $\mathrm{char}(K) = 2$. We also assume that $K$ is infinite. There is $\epsilon \in K$ such that $L= K(\beta )$, where $\beta$ is any root of the polynomial
$t^2+t+\epsilon$. Note that $1+\beta$ is the other root of the same polynomial and hence $\sigma (\beta )=1+\beta$ (note that $\sigma ^2(\beta )=2
+\beta =\beta$). We use $1, \beta$ as a basis of $L$ as a $2$-dimensional $K$-vector space. Take $z =x+y\beta \in L$. We have $\sigma (z) =x+y +y\beta$ and hence (since $xy+yx=0$ and $\beta^2+\beta =\epsilon$) $\sigma(z)z= x(x+y) + y^2\epsilon$.
For any $c\in \Delta$ the affine conic $\{x^2+ xy +y^2\epsilon +c=0\}\subset K^2$ is the circle with center $0$ and squared-radius $c$. If $c=0$, then this circle is the singleton
$\{0\}$. Now assume $c\in \hat{\Delta}$.
Take homogeneous coordinates $x, y, z$ in $\PP^2(\overline{L})$. Fix any $c\in K^\ast$. Set $g(x,y,z):= x^2+xy +y^2\epsilon +cz^2$. The projective conic $T:= \{g(x,y,z) =0\}\subset
\PP^2(\overline{L})$ is smooth, because $\frac{\partial}{\partial _x}g =y$, $\frac{\partial}{\partial _y}g =x$, $\frac{\partial}{\partial _z}g =0$
and hence the common zero-set of the partial derivatives of $g(x,y,z)$ is the point $(0:0:1) \notin T$. Hence if $c\in \hat{\Delta}$ any circle
$\{\sigma (z-\mu)(z-\mu )=c\}$ is an affine smooth conic with at least one point $P$ (because $c\in \Delta$). Taking the linear projection from $P$ we see that this circle
is infinite and with the cardinality of $K$.
\end{example}

\section{The proofs and related results}\label{St}

\begin{proof}[Proof of Proposition \ref{i1}:]
Taking $M-c\II _{n\times n}$ instead of $M$ and applying Remark \ref{a4} we reduce to the case $c=0$. Assume $\Nm (M)= \{0\}$ and $M\ne 0\II _{n\times n}$. In particular we
have $n>1$. Write $M= (m_{ij})$. Since every diagonal element
of $M$ is contained in $\Nm (M)$ by Remark \ref{a4}, we have $m_{ii}=0$ for all $i$. Hence there is $m_{ij}\ne 0$ with $i\ne j$. Taking $M^\dagger $ instead of $M$ if necessary
and applying Lemma \ref{a5} we reduce to the case $i<j$. A permutation of the orthonormal basis $e_1,\dots ,e_n$ is unitary and hence it preserves $\Nm (M)$.
Permuting this basis we reduce to the case $i=1$ and $j=2$. Taking $(1/m_{12})M$ instead of $M$ and applying Remark \ref{a4} we reduce to the case $m_{12}=1$. Set
$b:= m_{21}$. 
Take $u =(x,y)\in L^2$ such that $\sigma (x)x+\sigma (y)y=1$ and set $u:= (a_1,\dots ,a_n)$ with $a_1=x$, $a_2=y$ and $a_i=0$ for all $i>2$. We have
$\langle u,Mu\rangle = b\sigma (x)y +\sigma (y)x$. Since $b\ne 0$, it is sufficient to prove the existence of $x,y\in L^\ast$ such that $f(x,y):= \sigma (x)x+\sigma (y)y= 1$
and $g(x,y):= b\sigma (x)y +\sigma (y)x \ne 0$. We first take $x,y\in K$ and so $\sigma (x)=x$ and $\sigma (y)=y$. Hence $g(x,y) =2(b+1)xy$. Since $\mathrm{char}(K) \ne 2$, the affine conic $D:= \{x^2+y^2=1\} \subset K^2$ is smooth. Since $K$ is infinite and $(1,0)\in D$, $D$ has infinitely many $K$-points (use the linear projection from $(1,0)$). Hence we may find $(x,y)\in D$ with $g(x,y)\ne 0$, unless
$b=-1$. Now assume $b=-1$. Write $y=tx$. We have $g(x,y) =0$ if and only if either $xy=0$ or $t\in K$. By Lemma \ref{ccc2} there is $y\in L\setminus K$, so that $1-\sigma (y)y$ is a square in $K^\ast$ and so there is $x\in K^\ast$, $y\in L\setminus K$ with $1=\sigma(y)y+x^2$.
\end{proof}

\begin{proposition}\label{i2.1}
Take $M\in M_{2,2}(L)$ with a unique eigenvalue $c\in L$ with an eigenvector $v\ne 0$ with $\langle v,v\rangle =0$. Then $0\in \hat{\Delta}_2$ and either $M = c\II _{2,2}$
or there is $\mu \in L^\ast$ such that $\Nm (M) = c+\mu \hat{\Delta}$ and in the latter case $c\notin \Nm (M)$.
\end{proposition}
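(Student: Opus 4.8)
\textbf{Proof proposal for Proposition \ref{i2.1}.}

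The plan is to reduce to a normal form for $M$ and then compute $\Nm(M)$ directly. First I would translate by $-c\II_{2,2}$ and use Remark \ref{a4}: replacing $M$ by $M-c\II_{2,2}$ changes $\Nm(M)$ to $\Nm(M)-c$, so it suffices to treat the case $c=0$. Now $M$ is a nonzero $2\times2$ matrix (if $M\ne c\II_{2,2}$) with $0$ as its unique eigenvalue; hence $M$ is nilpotent with $M^2=0$, and its image is a one-dimensional subspace contained in its kernel. The eigenvector $v\ne 0$ with $\langle v,v\rangle =0$ spans $\ker M=\op{Im}M$. Pick $w\in L^2$ with $Mw=v$ (possible since $v\in\op{Im}M$); then $v,w$ is a basis of $L^2$, $Mv=0$ and $Mw=v$. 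Expressing a general vector $u=xv+yw$ and computing, $Mu = yv$, so $\langle u,Mu\rangle = \sigma(x)y\langle v,v\rangle + \sigma(y)y\langle w,v\rangle = \sigma(y)y\,\langle w,v\rangle$, using $\langle v,v\rangle=0$.

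Next I would identify the constraint $\langle u,u\rangle =1$ in these coordinates. Since $\langle v,v\rangle = 0$, we get $\langle u,u\rangle = \sigma(x)y\langle v,w\rangle + \sigma(y)x\langle w,v\rangle + \sigma(y)y\langle w,w\rangle$. Set $\lambda := \langle v,w\rangle$; then $\langle w,v\rangle = \sigma(\lambda)$ (from $\langle w,v\rangle = \sigma(\langle v,w\rangle)$). Note $\lambda\ne 0$: otherwise the sesquilinear form would be degenerate on the line $Lv$ plus have $v$ orthogonal to everything, contradicting nondegeneracy of $\langle\ ,\ \rangle$. So $\langle u,u\rangle = \sigma(x)y\lambda + \sigma(\lambda)\sigma(y)x + \sigma(y)y\langle w,w\rangle = 2\op{Re}(\sigma(\lambda)\sigma(y)x) + \sigma(y)y\langle w,w\rangle$ where I write the trace-like term suggestively; the key point is that $\langle w,w\rangle\in K$ and that, for \emph{fixed} $y\ne 0$, the map $x\mapsto \langle u,u\rangle$ is a surjective $K$-affine map onto $K$ (because $x\mapsto \sigma(\lambda)\sigma(y)x$ has nonzero coefficient and its real part is a nonzero $K$-linear functional $L\to K$, hence surjective). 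Therefore: for every $y\in L^\ast$ we can choose $x$ so that $\langle u,u\rangle =1$; and if $y=0$ then $\langle u,u\rangle = 0\ne 1$. Consequently $\Nm(M) = \{\sigma(y)y\,\lambda \mid y\in L^\ast\} = \lambda\hat\Delta$, which has the desired form with $\mu := \lambda\ne 0$.

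It remains to see that $0\in\hat\Delta_2$ and $c=0\notin\Nm(M)$. For the latter: $0\in\lambda\hat\Delta$ would force some $\sigma(y)y=0$ with $y\ne 0$, impossible; so $0\notin\Nm(M)$. For $0\in\hat\Delta_2$: the hypothesis gives $v\ne 0$ with $\langle v,v\rangle =0$, say $v=(v_1,v_2)$; then $\sigma(v_1)v_1 + \sigma(v_2)v_2 = 0$. If both $v_i\ne 0$, set $a:=\sigma(v_1)v_1\in\hat\Delta$; then $-a = \sigma(v_2)v_2\in\hat\Delta$, so $0 = a + (-a)\in\hat\Delta_2$. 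If one $v_i=0$, then the other gives $\sigma(v_j)v_j=0$ with $v_j\ne 0$, impossible; so both are nonzero and the argument applies. (In characteristic $2$ the ``$2\op{Re}$'' formula degenerates, but the trace form $x\mapsto\sigma(x)+x$ is still a nonzero $K$-linear functional $L\to K$, so the surjectivity argument goes through; alternatively one may invoke Remark \ref{a4} together with the char $2$ discussion and treat it as a minor separate case.)

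The main obstacle is establishing cleanly that, for each fixed $y\ne 0$, the equation $\langle u,u\rangle = 1$ is solvable in $x$ — i.e.\ the surjectivity of the relevant $K$-linear ``real part'' functional on $L$. This is where nondegeneracy of $\langle\ ,\ \rangle$ (forcing $\lambda\ne 0$) and the fact that $z\mapsto \sigma(z)+z$ is a nonzero $K$-linear map $L\to K$ both get used; once that is in hand the computation of $\Nm(M)=\lambda\hat\Delta$ is immediate, and the two closing assertions are bookkeeping.
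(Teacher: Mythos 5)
Your proof is correct and follows the same skeleton as the paper's: reduce to $c=0$ via Remark \ref{a4}, complete $v$ to a basis of $L^2$, observe that $\langle u,Mu\rangle$ collapses to $(\text{const})\cdot\sigma(y)y$ because $\langle v,v\rangle=0$, and then show that for each fixed $y\ne 0$ the constraint $\langle u,u\rangle=1$ can be solved for $x$. The one place where you genuinely diverge is in that last solvability step, and your version is cleaner: the paper completes $v$ with $e_2$ (normalized so $\langle v,e_2\rangle=1$) and then solves $\sigma(x)y+\sigma(y)x+\sigma(y)y=1$ by an explicit, case-split coordinate computation in characteristics $\ne 2$ and $2$; you instead take $w$ with $Mw=v$ and recognize $\sigma(x)y\lambda+\sigma(y)x\sigma(\lambda)$ as $\mathrm{Tr}_{L/K}(z)$ for $z=\sigma(\lambda)\sigma(y)x$, so surjectivity of the trace of the separable extension $L/K$ settles all characteristics at once. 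That buys a uniform, characteristic-free argument and makes explicit exactly which structural facts are used (nondegeneracy forcing $\lambda\ne 0$, and nonvanishing of the trace form). Two cosmetic points: your final constant is $\langle w,v\rangle=\sigma(\lambda)$, so you should set $\mu:=\sigma(\lambda)$ rather than $\lambda$ (harmless, since either is a valid $\mu\in L^\ast$); and your identification of $Lv$ with $\ker M=\op{Im}M$ tacitly uses that ``unique eigenvalue'' means the characteristic polynomial is $(t-c)^2$, which is forced here since one root lies in $L$ and the trace of $M$ lies in $L$.
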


\begin{proof}
Write $v:= (a_1,a_2)$. By assumption $(a_1,a_2)\ne (0,0)$ and
$\sigma (a_1)a_1+\sigma (a_2)a_2 =0$. Hence $0\in \hat{\Delta}_2$. Since $v\ne 0$ and $\langle v,v\rangle =0$, $e_2$ and $v$ are not proportional and so they form a basis of $L^2$. Since $\langle \ ,\ \rangle$ is non-degenerate and  $\langle v,v\rangle =0$, we have  $\langle v,e_2\rangle \ne 0$. Taking a multiple of $v$ if necessary
we reduce to the case $\langle v,e_2\rangle =1$. Thus $\langle e_2,v\rangle =1$. Assume $M\ne c\II _{2,2}$. Taking $M-c\II _{2,2}$ instead of $M$ and applying Remark \ref{a4}
we reduce to the case $c=0$. Write $M =(b_{ij})$, $i,j=1,2$, with respect to the basis $v,e_2$. We have $b_{11}=b_{22}=b_{12}=0$ and $b_{12}\ne 0$. Set $\mu:= b_{12}$.
Take $u=xv+ye_2$ such that $\langle u,u\rangle =1$, i.e. $x\sigma (y)+\sigma (x)y +\sigma (y)y=1$ (and in particular $y\ne0$). We have $\langle u,Mu\rangle = \langle xv+ye_2,\mu yv\rangle
= \mu \sigma (y)y$. Varying $y\in L^\ast$ as $\sigma (y)y$ we get all elements of $\hat{\Delta}$. Hence to conclude the proof it is sufficient to prove that for every $y\in L^\ast$
there is $x\in L$ such that $x\sigma (y) \sigma (x)y +\sigma (y)y=1$. First assume $\mathrm{char}(K) \ne 2$. Fix $e\in L\setminus K$ with $e^2 \notin K$ and write $x= u+ev$ and $y =c+ed$ with
$u, v, c,\in K$. We have $\sigma (x) = u-ev$ and $\sigma (y) =c-ed$. We find $2uc =\eta$ for some $\eta \in K$ and we always have a solution $u$, because we may take $y$ with
gives $\sigma (y)y$ and $c\ne 0$. Now assume $\mathrm{char}(K) =2$. There is $e\in L\setminus K$ such that $e^2+e \in L$ and $\sigma (e) =e+1$. Write
$x = u+ev$, $y=c+ed$ with $u, v,c, d\in K$ and $(c,d)\ne (0,0)$. Now we find an equation $ud+vc =\eta$ for some $\eta\in K$, with may always be satisfied, because $(c,d)\ne (0,0)$.
\end{proof}

\begin{remark}\label{i2.1bis}
Assume $0\in \hat{\Delta}_2$, say $0 = \sigma (x)x+\sigma (y)y$ with $(x,y)\in L^2\setminus \{(0,0)\}$, and take $c\in L$. Set $v:= xe_1+ye_2\in L^2\setminus \{(0,0)\}$.
Take any linear map $f: L^2\to L^2$ with $\mathrm{Ker}(f-c\mathrm{Id}_{L^2}) = \mathrm{Im}(f) =Lv$ and let $M$ be the matrix associated to $f$. Then $M \ne c\II _{2\times 2}$
and $M$ satisfies the assumptions of Proposition \ref{i2.1}.
\end{remark}

\begin{proof}[Proof of Theorem \ref{i2}:]
 By assumption there are $a_1, a_2\in L$ with $(a_1,a_2)\ne (0,0)$ and
$\sigma (a_1)a_1+\sigma (a_2)a_2 =0$. Set $v:= a_1e_1+a_2e_2$. We have $v\ne 0$ and $\langle v,v\rangle =0$. Hence $e_2$ and $v$ are not proportional and so they form a basis of $L^2$. Since $\langle \ ,\ \rangle$ is non-degenerate and  $\langle v,v\rangle =0$, we have  $\langle v,e_2\rangle \ne 0$. Taking a multiple of $v$ if necessary
we reduce to the case $\langle v,e_2\rangle =1$. Thus $\langle e_2,v\rangle =1$. Take $M\in M_{n,n}(L)$ defined by $Mv=cv$ and $Me_2 = \mu v+ce_2$. Apply Proposition \ref{i2.1}.\end{proof}

\begin{proof}[Proof of Theorem \ref{a3}:] Taking $M-c\II _{2,2}$ instead of $M$ and applying Remark \ref{a4} we reduce to the case $c=0$.

Set $\delta := \langle v,v\rangle$. Write $v =a_1e_1+a_2e_2$ and set $w:= -\sigma (a_2)e_1+\sigma (a_1)e_2$. We have $\langle v,w\rangle = -\sigma (a_1)\sigma (a_2)
+\sigma (a_2)\sigma (a_1)=0$ and hence $\langle w,v\rangle =0$. Since $\delta =\langle v,v\rangle =\sigma (a_1)a_1+\sigma (a_2)a_2$ and $\sigma (\sigma (a_i))=a_i$,
we have $\langle w,w\rangle =\delta$. Since $\delta \ne 0$ and $\langle v,w\rangle =0$, $v,w$ are a basis of $L^2$. Write $M =(b_{ij})$, $i, j=1,2$, with respect to the basis
$v,w$. By assumption we have $b_{11}=b_{21}=0$. Since $M$ has a unique eigenvalue, we have $b_{22}=0$. Assume $M\ne 0\II _{2,2}$ and hence $b_{12}\ne 0$.
Set $\mu := b_{12}$. 

\quad (i) If $\delta \in \Delta$, then $\delta \in \Nm (M)$ by Remark \ref{i0.1.0}. Now assume $0\in \Nm (M)$, i.e. assume the existence of $u=xv+yw$ such that $\langle u,u\rangle =1$ (i.e. such that $\sigma (x)x+\sigma (y)y =1/\delta$)
and $\langle u,Mu\rangle =0$ (i.e. $0 = \langle xv+yw,\mu yv\rangle = \mu \delta \sigma (x)y$).  Hence either $y=0$ or $x=0$. Hence $u$ is either a multiple of $v$ or a multiple of $w$. Since $\langle v,v\rangle =\langle w,w\rangle =\delta$, Lemma \ref{i0.1} gives $\delta \in \Delta$.

\quad (i) Now assume $\delta \in \hat{\Delta}$. Since $\hat{\Delta}$ is a multiplicative group, there
is $t\in L$ such $\sigma (t)t =1/\delta$. Set $v_1:= tv$ and $v_2:= tw$. We have $\langle v_i,v_i\rangle =1$ and $\langle v_i,v_j\rangle =0$ if $i\ne j$. 
Let $(m_{ij})$ be the matrix associated to $M$ in the basis $v_1,v_2$. We have $m_{11}=m_{12}=0$. Since $M$ has a unique eigenvalue, we have
$m_{22} =0$. Assume $M\ne 0\II _{2\times 2}$, i.e. assume $m_{12}\ne 0$. Taking $(1/m_{12})M$ instead of $M$ and applying
Remark \ref{a4} we reduce to the case $m_{12}=1$. Take $u=xw_1+yw_2$ with $\langle u,u\rangle =1$, i.e. with $\sigma(x)x+\sigma (y)y =1$. Set $\gamma := \sigma(x)x$
and hence $\sigma (y)y =1-\gamma$. Since $0\in \Nm (M)$, to check all other elements of $\Nm (M)$ we may assume $xy\ne 0$, i.e. $\gamma \notin \{0,1\}$. Note that $\gamma$
is an arbitrary element of  $\hat{\Delta} \cap (1-\hat{\Delta })$.  We fix $\gamma$, but we only take, $x, y$ with $\sigma (x)x=\gamma$ and $\sigma (y)y =1-\gamma$.  
We have $\langle u,Mu\rangle = \langle xw_1+yw_2,yw_1\rangle = \sigma (x)y$. Note that $\sigma (x)y\cdot \sigma (\sigma (x)y) = \gamma (1-\gamma)$.
Fix $w\in L$ such that $\sigma (w)w =\gamma (1-\gamma )$. Take any $x$ with $\sigma (x)x=\gamma$. Note that $x\ne 0$, because $\gamma \ne 0$. Taket $y:= w/\sigma (x)$.
To conclude the proof of part (b) it is sufficient to prove that $\sigma (y)y=1-\gamma$. We have $\sigma (y)y = \sigma (w)w/x\sigma (x) = \gamma (1-\gamma )/\gamma$.\end{proof}

Lemma \ref{a3.0} and Proposition \ref{a3.1} are, respectively, the analogue of \cite[Lemma 3.6 and Theorem 1.2 (d)]{cjklr}. In their statements the last $\bigcup$ is a union of circles with center $0$, in which if we take $d\in \hat{\Delta}\cap (1-\hat{\Delta})$ as a parameter space the circles coming from $d$ and $1-d$ are the same and we do not claim that $\bigcup$ is a disjoint union (see \cite[Example 3.7]{cjklr}).
\begin{lemma}\label{a3.0}
Fix $b\in L^\ast$ and let $M =(m_{ij})$ be the $2\times 2$ matrix with $a_{11}=1$, $a_{21}=a_{22}=0$ and $a_{12}=b$. Then $\Nm (M) =\{0,1\}\cup \bigcup_{d(1-d),d\in \hat{\Delta}\cap (1-\hat{\Delta})}  C(\sigma(b)bd(1-d),d)$.
\end{lemma}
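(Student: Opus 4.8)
The plan is to compute $\Nm(M)$ directly by evaluating $\langle u, Mu\rangle$ on vectors $u = (x,y) \in L^2$ with $\langle u,u\rangle = 1$ and then reorganizing the result according to the value of the ``diagonal mass'' $\sigma(x)x$. Since $M$ has $m_{11}=1$, $m_{12}=b$, $m_{21}=m_{22}=0$, for $u=(x,y)$ we get $Mu = (x+by, 0)$ and hence $\langle u,Mu\rangle = \sigma(x)(x+by) = \sigma(x)x + b\,\sigma(x)y$. The constraint is $\sigma(x)x + \sigma(y)y = 1$. Write $d := \sigma(x)x \in \Delta$; then $\sigma(y)y = 1-d \in \Delta$ as well, so $d \in \hat{\Delta}\cap(1-\hat{\Delta})$ precisely when $xy \ne 0$, while $d=0$ forces $x=0$ (giving $\langle u,Mu\rangle = 0$) and $d=1$ forces $y=0$ (giving $\langle u,Mu\rangle = 1$). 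This immediately accounts for the $\{0,1\}$ part.

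Next I would fix $d \in \hat{\Delta}\cap(1-\hat{\Delta})$ and show that the set of values $\sigma(x)x + b\,\sigma(x)y$ obtained as $(x,y)$ ranges over all pairs with $\sigma(x)x = d$ and $\sigma(y)y = 1-d$ is exactly the circle $C(\sigma(b)b\,d(1-d), d)$, i.e.\ all $z \in L$ with $\sigma(z-d)(z-d) = \sigma(b)b\,d(1-d)$. For the inclusion ``$\subseteq$'': given such $(x,y)$, set $z := d + b\,\sigma(x)y$; then $z - d = b\sigma(x)y$ and $\sigma(z-d)(z-d) = \sigma(b)b\cdot \sigma(x)x\cdot\sigma(y)y = \sigma(b)b\,d(1-d)$, so $z$ lies on the stated circle. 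For ``$\supseteq$'': given $z$ on the circle, I need $x,y \in L^\ast$ with $\sigma(x)x = d$, $\sigma(y)y = 1-d$, and $b\sigma(x)y = z-d$. Since $d \in \hat{\Delta}$, pick any $x$ with $\sigma(x)x = d$ (so $x \ne 0$), and set $y := (z-d)/(b\sigma(x))$; the only thing to verify is $\sigma(y)y = 1-d$, which follows from $\sigma(y)y = \sigma(z-d)(z-d)/(\sigma(b)b\,\sigma(x)x) = \sigma(b)b\,d(1-d)/(\sigma(b)b\,d) = 1-d$. This is exactly the surjectivity computation already carried out in the proof of Theorem~\ref{a3}(b), so I would cite that argument rather than repeat it.

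Assembling: $\Nm(M) = \{0,1\} \cup \bigcup_{d} C(\sigma(b)b\,d(1-d), d)$ where $d$ runs over $\hat{\Delta}\cap(1-\hat{\Delta})$, matching the claimed formula. The remarks preceding the lemma already flag that the union is genuinely a union (not disjoint) and that $d$ and $1-d$ give the same circle; I would not belabor this, but it is worth noting that replacing $d$ by $1-d$ swaps the roles of $x$ and $y$ and sends the circle $C(\sigma(b)b\,d(1-d),d)$ centered at $d$ to the one centered at $1-d$ with the same squared-radius $\sigma(b)b\,d(1-d)$, consistent with the symmetry built into the indexing.

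I do not expect a serious obstacle here: the lemma is essentially a bookkeeping repackaging of the computation inside Theorem~\ref{a3}(b), now tracked in the \emph{original} basis rather than after normalizing to an orthonormal one, which is why the factor $\sigma(b)b$ and the shifted center $d$ appear. The one point requiring a little care is making sure the circles are indexed correctly — the squared-radius is $\sigma(b)b\,d(1-d)$ and the center is $d$, not $0$ — and confirming that every $d \in \hat{\Delta}\cap(1-\hat{\Delta})$ is actually realized (which it is, since both $d$ and $1-d$ lie in $\hat{\Delta}$ so the required $x,y$ exist by Remark~\ref{w0}/Lemma~\ref{i0.1}). Everything else is the two-line direct computation of $\langle u,Mu\rangle$ above.
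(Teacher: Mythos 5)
Your proposal is correct and follows essentially the same route as the paper's own proof: both compute $\langle u,Mu\rangle = d + b\sigma(x)y$ with $d=\sigma(x)x$, handle $d\in\{0,1\}$ via $e_1,e_2$, verify the forward inclusion by the norm computation $\sigma(b\sigma(x)y)\,b\sigma(x)y=\sigma(b)b\,d(1-d)$, and obtain the reverse inclusion by choosing $x$ with $\sigma(x)x=d$ and setting $y:=(z-d)/(b\sigma(x))$. No substantive difference.
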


\begin{proof}
The vector $e_1$ gives $1\in \Nm (M)$. The vector $e_2$ gives $0\in \Nm (M)$. Take $u=xe_1+xe_2\in L^2$ such that $\langle u,u\rangle =1$, i.e. such that $\sigma (x)x +\sigma (y)y=1$. Set $d:= \sigma (x)x \in \Delta$. We have $\sigma (y)y=1-d$
and hence $d\in (1-\Delta )$. We have $\langle u,Mu\rangle =\langle xe_1+y_2,(x+by)e_1\rangle =d+b\sigma (x)y$. Set $m:= b\sigma (x)y$. We have $\sigma (m)m = \sigma (b)b
\sigma (y)y\sigma (x)x = \sigma (b)bd(1-d)$. Thus $m \in C(bd(1-d),0)$ and hence $\langle u,Mu\rangle \in d + 
C(\sigma (b)bd(1-d),0)$. Since $0, 1\in \Nm (M)$, from now on we may assume $d\notin \{0,1\}$ and prove the other inclusion. Take any $m'\in C(\sigma (b)bd(1-d),0)$, i.e. with $\sigma (m')m' = \sigma (b)bd(1-d)$. Take $x_1\in C(d,0)$. Since $d\ne 0$, we have $x_1\ne 0$. Set $y_1:= m'/(b\sigma (b)x_1$ and $u':= x_1e_1+y_1e_2$. We have $y_1\in C((1-d),0)$, $\langle u',Mu'\rangle = d+m'$ and $\sigma (x_1)x_1+\sigma (x_2)x_2 = \sigma (b)bd(1-d)$. 
\end{proof}

\begin{proposition}\label{a3.1}
Assume $n=2$ and that $M$ has two eigenvalues $c_1,c_2\in L$, $c_1\ne c_2$,  with eigenvectors $v_i$ for $c_i$ with $\langle v_1,v_1\rangle  \in \hat{\Delta}$.

\quad (i) If $\langle v_1,v_2\rangle =0$, then $M$ is unitarily equivalent to $c_1\II _{1\times 1}\oplus c_2\II _{1\times 1}$.

\quad (ii) Assume $\langle v_1,v_2\rangle \ne 0$. Then there is $\mu \in L^\ast$ such that $\Nm (M)$ is the union of $\{c_1,c_2\}$ and a union of circles  $C(\sigma (\mu)\mu d(1-d),d)$ with $d\in \hat{\Delta}\cap (1-\hat{\Delta})$. 
\end{proposition}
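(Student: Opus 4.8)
The idea is to reduce both parts to the $2\times 2$ model of Lemma~\ref{a3.0}: normalize $v_1$, build an orthonormal basis containing it, read off the matrix of $M$ in that basis, and for (ii) apply the affine reduction of Remark~\ref{a4} to bring the eigenvalues to $1$ and $0$.

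\emph{Step 1: an orthonormal basis through $v_1$.} Since $\langle v_1,v_1\rangle\in\hat\Delta$, Lemma~\ref{i0.1} gives $t\in L^\ast$ with $\langle tv_1,tv_1\rangle=1$; replacing $v_1$ by $tv_1$ (this only multiplies $\langle v_1,v_2\rangle$ by $\sigma(t)\neq 0$, so it does not change whether $\langle v_1,v_2\rangle=0$) I may assume $\langle v_1,v_1\rangle=1$. Writing $v_1=a_1e_1+a_2e_2$ and setting $w:=-\sigma(a_2)e_1+\sigma(a_1)e_2$ as in the proof of Theorem~\ref{a3}, one gets $\langle v_1,w\rangle=\langle w,v_1\rangle=0$ and $\langle w,w\rangle=\langle v_1,v_1\rangle=1$, so $(v_1,w)$ is an orthonormal basis of $L^2$ and the base change from $(e_1,e_2)$ to $(v_1,w)$ is unitary; hence it preserves $\Nm$. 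In this basis $M$ has first column $(c_1,0)^{\mathrm T}$ because $Mv_1=c_1v_1$, so $M$ is upper triangular with upper-right entry some $\mu\in L$ and lower-left entry $0$; since $M$ has the two distinct eigenvalues $c_1,c_2$ and its characteristic polynomial is $(t-c_1)(t-m_{22})$, the $(2,2)$-entry must be $m_{22}=c_2$.

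\emph{Step 2: locating $\mu$ via $v_2$.} Write $v_2=pv_1+qw$. As $v_2$ is an eigenvector for $c_2\neq c_1$ it is not proportional to $v_1$, so $q\neq 0$, and $\langle v_1,v_2\rangle=p\langle v_1,v_1\rangle+q\langle v_1,w\rangle=p$. Comparing the $v_1$-components of $Mv_2=c_2v_2$ yields $q\mu=p(c_2-c_1)$, i.e. $\mu=p(c_2-c_1)/q$. In case (i), $\langle v_1,v_2\rangle=0$ gives $p=0$, hence $\mu=0$, so in the orthonormal basis $(v_1,w)$ one has $M=\mathrm{diag}(c_1,c_2)$; that is, $M$ is unitarily equivalent to $c_1\II_{1\times1}\oplus c_2\II_{1\times1}$. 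In case (ii), $\langle v_1,v_2\rangle\neq 0$ forces $p\neq0$, hence $\mu\neq0$.

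\emph{Step 3: reduction to Lemma~\ref{a3.0} in case (ii).} Put $M_0:=(M-c_2\II_{2\times2})/(c_1-c_2)$, whose matrix in $(v_1,w)$ has diagonal $(1,0)$, lower-left entry $0$, and upper-right entry $b:=\mu/(c_1-c_2)\in L^\ast$. Since $\Nm$ is invariant under unitary base change, Lemma~\ref{a3.0} applies to $M_0$ and gives $\Nm(M_0)=\{0,1\}\cup\bigcup_{d\in\hat\Delta\cap(1-\hat\Delta)}C(\sigma(b)b\,d(1-d),d)$, while Remark~\ref{a4} gives $\Nm(M)=c_2+(c_1-c_2)\Nm(M_0)$. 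The affine map $z\mapsto c_2+(c_1-c_2)z$ sends $\{0,1\}$ to $\{c_2,c_1\}$ and sends a circle $C(k,e)$ to the circle with center $c_2+(c_1-c_2)e$ and squared-radius $\sigma(c_1-c_2)(c_1-c_2)k$; taking $k=\sigma(b)b\,d(1-d)$ and using $(c_1-c_2)b=\mu$, the squared-radius becomes $\sigma(\mu)\mu\,d(1-d)$. This is the asserted description (the center of the $d$-th circle is the affine image $c_2+(c_1-c_2)d$ of $d$; if one first normalizes $c_1=1$, $c_2=0$ by Remark~\ref{a4} at the outset, the statement is literally Lemma~\ref{a3.0}).

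\emph{Expected difficulty.} There is no real obstacle once Lemma~\ref{a3.0} is in hand: the content is the bookkeeping that rescaling $v_1$, adjoining the orthonormal companion $w$, and the affine substitution all preserve the hypotheses of the proposition and transform $\Nm$ in the predicted way. The one hypothesis that is genuinely used is $\langle v_1,v_1\rangle\in\hat\Delta$, which is exactly what makes $(v_1,w)$ an \emph{orthonormal} (not merely orthogonal) basis, so that the base change is unitary.
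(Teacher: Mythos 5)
Your proof is correct and follows essentially the same route as the paper: normalize $v_1$ to unit length via Lemma \ref{i0.1}, adjoin the orthonormal companion $w=-\sigma(a_2)e_1+\sigma(a_1)e_2$, observe that in this unitary basis $M$ is upper triangular with off-diagonal entry vanishing exactly when $\langle v_1,v_2\rangle=0$, and reduce case (ii) to Lemma \ref{a3.0} by the affine normalization of Remark \ref{a4}. Your explicit computation $q\mu=p(c_2-c_1)$ and your remark about how the affine map transports the centers and squared-radii of the circles are in fact slightly more careful than the paper's own write-up.
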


\begin{proof}
Set $\delta := \langle v_1,v_1\rangle$. Since $1/\delta \in \hat{\Delta}$ (Remark \ref{w2}), there is $t\in L^\ast$ such that $\sigma (t)t = 1/\delta$.
Set $w_1:= tv_1$. We have $\langle w_1,w_1\rangle =1$ and $Mw_1=c_1w_1$. Write $w_1= a_1e_1 +a_2e_2$ for some $a_1,a_2\in L$.
Set $w_2:= -\sigma (a_2)e_1+\sigma (a_1)e_1$. Note that $\langle w_2,w_1\rangle =0$ and $\langle w_2,w_2\rangle =1$. Taking $M-c_2\II _{2,2}$
instead of $M$ and applying Remark \ref{a4} we reduce to the case $c_2=0$ and hence $c:= c_1-c_2\ne 0$. Taking $(1/c)M$ instead of $M$
and applying Remark \ref{a4} we reduce to the case $c =c_1-c_2=1$. Write $M= (m_{ij})$, $i=1,2$, in the orthonormal basis $w_1$ and $w_2$.
We have $m_{11}=1$ and $m_{21}=m_{22}=0$. If $m_{12} =0$, then $M$ is unitary equivalent to the matrix $c_1\II _{1\times 1}\oplus c_2\II _{1\times 1}$.
We have $m_{12}=0$ if and only if $w_2$ is proportional to $v_2$, i.e. (being $v_2$ linearly independent from $tv_1=w_1$) if and only if  $\langle v_1,v_2\rangle =0$.
Apply Lemma \ref{a3.2} with $\mu := m_{12}/(c_1-c_2)$.
\end{proof}

\begin{proposition}\label{a3.2}
Take $M\in M_{2,2}(L)$ with eigenvalues $c_1,c_2\in L$, $c_1\ne c_2$, and take $v_i\in L^2$ such that $Mv_i=c_iv_i$ and $v_i\ne 0$. Assume
$\langle v_i,v_i\rangle =0$ for all $i$. Set $\Dd := \{t\in L\mid t+\sigma (t) =1\}$. Then $\Nm (M) =\Dd$
\end{proposition}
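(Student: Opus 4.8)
The plan is to change to the basis of $L^2$ consisting of the two eigenvectors, in which both the form $\langle \ ,\ \rangle$ and $M$ become very simple, and then read off $\Nm(M)$ through a single substitution. First I would note that, since $c_1\ne c_2$, the vectors $v_1,v_2$ are $L$-linearly independent and hence a basis of $L^2$. Because $\langle \ ,\ \rangle$ is non-degenerate whereas $\langle v_1,v_1\rangle=0$, the scalar $\langle v_1,v_2\rangle$ is nonzero (otherwise $v_1$ would lie in the radical of the form). Replacing $v_2$ by $v_2/\langle v_1,v_2\rangle$ I may assume $\langle v_1,v_2\rangle=1$, and then $\langle v_2,v_1\rangle=\sigma(\langle v_1,v_2\rangle)=1$ by the identity $\langle v,w\rangle=\sigma(\langle w,v\rangle)$. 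So in the basis $v_1,v_2$ the form has zero diagonal and off-diagonal entries $1$, while $M$ acts as the diagonal matrix with entries $c_1,c_2$.

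Next I would invoke Remark \ref{a4}: replacing $M$ by $(c_1-c_2)^{-1}(M-c_2\II_{2,2})$, which has the same eigenvectors $v_1,v_2$ but eigenvalues $1$ and $0$, I reduce to $c_1=1$, $c_2=0$. Now take any $u\in L^2$ and write $u=xv_1+yv_2$ with $x,y\in L$. Expanding by sesquilinearity and using $\langle v_i,v_i\rangle=0$, $\langle v_1,v_2\rangle=\langle v_2,v_1\rangle=1$, one finds $\langle u,u\rangle=\sigma(x)y+x\sigma(y)$; and since $Mu=xv_1$, one finds $\langle u,Mu\rangle=x\sigma(y)$. Setting $t:=x\sigma(y)$, so that $\sigma(t)=\sigma(x)y$, this becomes $\langle u,u\rangle=t+\sigma(t)$ and $\langle u,Mu\rangle=t$.

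From here the statement drops out. If $u\in C_2(1)$ then $t+\sigma(t)=1$, i.e. $t\in\Dd$, and $\langle u,Mu\rangle=t$, so $\Nm(M)\subseteq\Dd$. Conversely, for $t\in\Dd$ the vector $u:=tv_1+v_2$ has $\langle u,u\rangle=\sigma(t)+t=1$ and $\langle u,Mu\rangle=t$, so $t\in\Nm(M)$; thus $\Dd\subseteq\Nm(M)$ and $\Nm(M)=\Dd$.

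I do not expect a serious obstacle; once the eigenvector basis is fixed the computation is a few lines. The load-bearing point is the observation that $\langle v_1,v_2\rangle\ne0$: it is precisely the combination of non-degeneracy of $\langle \ ,\ \rangle$ with $\langle v_i,v_i\rangle=0$ that makes the Gram matrix the hyperbolic plane, and this is what yields the clean substitution $t=x\sigma(y)$ identifying $\Nm(M)$ with $\{t\in L:t+\sigma(t)=1\}$. The only thing requiring care is keeping track of which slot of the sesquilinear form is the conjugate-linear one.
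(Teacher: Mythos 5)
Your argument is correct, and its skeleton is the same as the paper's: pass to the eigenvector basis, observe that non-degeneracy of $\langle\ ,\ \rangle$ together with $\langle v_1,v_1\rangle=0$ forces $\langle v_1,v_2\rangle\ne 0$, rescale $v_2$ so the Gram matrix is the hyperbolic plane, and use Remark \ref{a4} to normalize the eigenvalues to $\{1,0\}$. Where you genuinely diverge is the endgame. The paper introduces, for each $b\in L^\ast$, the set $N(b)$ of values $\sigma(x)b$ subject to $\sigma(x)b+\sigma(b)x=1$, proves $N(y)=N(y_0)$ for all $y,y_0\in L^\ast$ by an explicit rescaling, and then identifies $\Nm(M)=N(1)=\Dd$ using $\sigma(N(1))=N(1)$. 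Your single substitution $t:=x\sigma(y)$ collapses all of this: it makes $\langle u,u\rangle=t+\sigma(t)$ and $\langle u,Mu\rangle=t$ functions of the one parameter $t$, so both inclusions fall out immediately (the converse via $u=tv_1+v_2$), and you avoid the $N(b)$ bookkeeping entirely. One caveat, which applies equally to the paper's proof: what is actually established is $\Nm(M)=\Dd$ for the normalized matrix with eigenvalues $1$ and $0$. Undoing the reduction, the general matrix satisfies $\Nm(M)=\{c_1t+c_2(1-t)\mid t\in\Dd\}=c_2+(c_1-c_2)\Dd$, which coincides with $\Dd$ itself only up to this affine change of variable; so the literal equality $\Nm(M)=\Dd$ in the statement should be read modulo that normalization. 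This is a defect of the Proposition's statement shared with the published proof, not a gap in yours.
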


\begin{proof}
Taking $(1(c_2-c_1)(M-c_1\II_{2\times 2})$ and applying Remark \ref{a4} we reduce to the case $c_1=0$ and $c_2=1$. Since $\langle \ ,\ \rangle$ is non-degenerate and $\langle v_i,v_i\rangle =0$ for all $i$, we have $\langle v_1,v_2\rangle \ne 0$. Taking $(1/\langle v_1,v_2\rangle  )v_2$
instead of $v_2$ we reduce to the case $\langle v_1,v_2\rangle =1$. Thus $\langle v_2,v_1\rangle =1$. Take $u=xv_1+yv_2$ with $\langle u,u\rangle=1$, i.e.
with $\sigma (x)y+\sigma (y)x =1$. Note that $x\ne 0$ and $y\ne 0$. We have $\langle u,Mu\rangle = \langle xv_1+yv_2,yv_1 \rangle= \sigma (x)y$.  For any
$b\in L^\ast$ let $N(b)$ be the set of all $\sigma (x)b$ with $\sigma (x)b+\sigma (b)x =1$. Fix $y\in L^\ast$. Set $t:= x/y$. Note that $t\ne 0$. Since $\sigma (x)y+\sigma (y)x =1$, we have $\sigma (t)y\sigma (y) +ty\sigma (y)  =1$and $\langle u,Mu\rangle =\sigma (t)y\sigma (y)$.Take $t_0,y_0,y\in L^\ast$
such that $t_0\sigma (t_0)y_0\sigma (y_0) +t_0y_0\sigma (y_0)  =1$ and set $t_1:= t_0y_0\sigma (y_0)/(y\sigma (y))$. We have $\sigma (t_1)y\sigma (y) + t_1y\sigma (y)  = t_0\sigma (t_0)\sigma (t_0) + t_0y_0\sigma (y_0)$. Hence $N(y)\supseteq N(y_0)$. By symmetry we get $N(y) =N(y_0)$. Taking $y_0=1$ we get
$\Nm (M) =N(1)$. Note that  $t+\sigma (t) =1$ if and only if $\sigma (t)+t=1$. Hence $\sigma (N(1)) =N(1)$. Thus $\Nm (M) =\Dd$. 
\end{proof}

\begin{proposition}\label{a0.1}
Fix $n>2$ and assume $\Delta _n=\Delta$. Take $M\in M_{n,n}(L)$ with an eigenvalue $c\in L$ with an eigenspace of dimension $n-1$. Then 
one of the following cases occurs:
\begin{enumerate}
\item $M$ is unitarily equivalent to $c\II _{n-1,n-1}\oplus d\II _{1\times 1}$ (unitary direct sum) for some $d\ne c$;
\item $M$ is unitarily equivalent to $c\II _{n-2,n-2}\oplus M'$ (unitary direct sum) with $M'$ as in case (ii) of Proposition \ref{a3.1};
\item $M$ is unitarily equivalent to $c\II _{n-2,n-2}\oplus M'$ (unitary direct sum) with $M'$ as in Theorem \ref{a3};
\item $M$ is unitarily equivalent to $c\II _{n-2,n-2}\oplus M'$ (unitary direct sum) with $M'$ as in Proposition \ref{a3.2};
\item $M$ is unitarily equivalent to $c\II _{n-2,n-2}\oplus M'$ (unitary direct sum) with $M'$ as in Proposition \ref{i2.1}.
\end{enumerate}
\end{proposition}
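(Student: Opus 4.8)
The idea is to reduce $M$ to a rank-one perturbation of a scalar matrix and then read off the five cases from the geometry of one small subspace. First replace $M$ by $M-c\II_{n,n}$ (Remark \ref{a4}), so that $c=0$; then $\ker M$ has dimension $n-1$, so $M$ has rank $1$, and since $\langle\ ,\ \rangle$ is non-degenerate there are $z,w\in L^{n}\setminus\{0\}$ with $Mx=\langle z,x\rangle w$ for all $x$. Hence the $0$-eigenspace is $z^{\perp}:=\{x:\langle z,x\rangle=0\}$, $\mathrm{Im}\,M=Lw$, and the only eigenvalue of $M$ besides $0$ is its trace $\langle z,w\rangle$. Put $Z:=Lz+Lw$: it is $M$-invariant (as $Mz=\langle z,z\rangle w$ and $Mw=\langle z,w\rangle w$ lie in $Z$), $M$ vanishes on $Z^{\perp}\subseteq z^{\perp}$, and any non-degenerate $M$-invariant $W'$ on whose orthogonal complement $M$ vanishes must contain $z$ (because $(W')^{\perp}\subseteq\ker M=z^{\perp}$) and then also $w$ unless $M=0$; so $W'\supseteq Z$, and when $\dim_{L}Z=2$ the only admissible splitting is $L^{n}=Z^{\perp}\oplus Z$. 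Thus everything reduces to the geometry of the at most two-dimensional space $Z$. Since $\Delta_{n}=\Delta$, every $u\in L^{n}$ has $\langle u,u\rangle\in\Delta$, so every non-degenerate subspace of $L^{n}$ has an orthonormal basis (pick an anisotropic vector, normalize by Lemma \ref{i0.1}, iterate), which makes the blocks below normalizable.

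If $\dim_{L}Z=1$, say $w=\lambda z$ with $\lambda\in L^{\ast}$: when $\langle z,z\rangle\neq0$ one gets $L^{n}=Z^{\perp}\oplus Z$ orthogonally, $M$ being $0$ on $Z^{\perp}$ and multiplication by $\lambda\langle z,z\rangle\neq0$ on $Lz$, i.e.\ case (1) with $d=c+\lambda\langle z,z\rangle$; when $\langle z,z\rangle=0$, $z$ is an isotropic eigenvector, $M^{2}=0$, and choosing $z'$ with $\langle z,z'\rangle=1$ makes $W':=Lz+Lz'$ non-degenerate and $M$-invariant ($Mz=0$, $Mz'=\lambda z$), with $M|_{W'}$ having the single eigenvalue $0$, the isotropic eigenvector $z$, and $M|_{W'}\neq0$, hence as in Proposition \ref{i2.1}: case (5).

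If $\dim_{L}Z=2$ with $Z$ non-degenerate, then $L^{n}=Z^{\perp}\oplus Z$ orthogonally and, in an orthonormal basis of $Z$, $M=0\II_{n-2,n-2}\oplus M'$ with $M'\in M_{2,2}(L)$. Let $r:=\langle z,w\rangle$ and let $D:=\langle z,z\rangle\langle w,w\rangle-\mathrm{Norm}_{L/K}(r)\neq0$ be the Gram determinant of $(z,w)$. If $r\neq0$, $M'$ has the distinct eigenvalues $0$ and $r$, with $r$-eigenvector $w$ and $0$-eigenvector $v_{1}:=rz-\langle z,z\rangle w$ spanning $Z\cap z^{\perp}$, and $\langle v_{1},v_{1}\rangle=\langle z,z\rangle D$, $\langle v_{1},w\rangle=-D$; if one of $\langle v_{1},v_{1}\rangle$, $\langle w,w\rangle$ is nonzero then $M'$ is as in Proposition \ref{a3.1}(ii) (use the anisotropic eigenvector as its ``$v_{1}$''; here $\langle v_{1},v_{2}\rangle\neq0$ because $D\neq0$), i.e.\ case (2), while if both vanish $M'$ is as in Proposition \ref{a3.2}, i.e.\ case (4). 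If $r=0$, then $D=\langle z,z\rangle\langle w,w\rangle\neq0$ forces both factors nonzero, $M'$ has the single eigenvalue $0$ with anisotropic eigenvector $w$ and $M'\neq0\II_{2,2}$, hence $M'$ is as in Theorem \ref{a3}: case (3).

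The one delicate case is $\dim_{L}Z=2$ with $Z$ \emph{degenerate}, i.e.\ $\langle z,z\rangle\langle w,w\rangle=\mathrm{Norm}_{L/K}(\langle z,w\rangle)$. Then $Z\cap Z^{\perp}$ is spanned by a nonzero isotropic vector $s$, and $\langle z,s\rangle=0$ gives $Ms=0$, so $s$ is an isotropic $0$-eigenvector; but $L^{n}\neq Z^{\perp}\oplus Z$, and by the first paragraph every admissible $W'$ would have to equal the degenerate plane $Z$, so the clean orthogonal splitting is unavailable. This is the crux. Here the plan would be to peel off, via Lemma \ref{w0.1}, an orthonormal system of $z^{\perp}$ transverse to $s$, reduce to a small residual block, and match that block to the list; alternatively, when the standing hypotheses rule out this configuration — e.g.\ whenever $0\notin\hat{\Delta}_{2}$, in particular in the classical case and, by Proposition \ref{zc2}, whenever $K$ is formally real and $L$ is not — one simply records it. Settling the degenerate case is where the real work lies; the verifications in the two preceding cases (matching each $2\times2$ block to the exact hypotheses of the quoted results, and orthonormalizing) are routine once $\Delta_{n}=\Delta$.
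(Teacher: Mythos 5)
Your reduction to the rank-one map $Mx=\langle z,x\rangle w$ and the trichotomy on the plane $Z=Lz+Lw$ is clean, and the cases you do settle ($\dim_LZ=1$ with $z$ anisotropic or isotropic, and $\dim_LZ=2$ with $Z$ non-degenerate) are correctly matched to alternatives (1), (5), (2)/(4) and (3). But the proposal is not a proof: the case of a degenerate two-dimensional $Z$ is left open, and you say so. That case is not vacuous under the stated hypotheses: $\Delta_n=\Delta$ does not exclude $0\in\hat{\Delta}_2$ (both hold, e.g., over finite fields), and if $\sigma(a)a+\sigma(b)b=0$ with $a,b\in L^\ast$ then $z=e_1$, $w=ae_2+be_3$ give a rank-one nilpotent $M$ with $(n-1)$-dimensional kernel and $Z$ a degenerate plane. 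Worse, your own first paragraph shows this gap cannot be closed by ``more of the same'': you prove that any non-degenerate $M$-invariant $W'$ with $M$ vanishing on $(W')^\perp$ must contain $Z$, so for degenerate $Z$ no unitary splitting $c\II_{n-2,n-2}\oplus M'$ exists, and alternative (1) is also excluded since it forces $w\in Lz$. So in the degenerate case none of the five alternatives can hold; the configuration would have to be excluded by some additional hypothesis, and it is not. This is a genuine obstruction, not the routine verification your last sentence suggests it might be.

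For comparison, the paper's proof takes a different route: it uses Lemma \ref{w0.1} (this is where $\Delta_n=\Delta$ enters) to extract an orthonormal system $f_1,\dots,f_{n-2}$ inside the kernel $W$, sets $V$ equal to its span, and analyzes $M$ restricted to the non-degenerate two-plane $V^\perp$. The $M$-invariance of $V^\perp$ is derived there from the claim that $M^\dagger$ vanishes on $V$, argued via $\langle v,M^\dagger v'\rangle=\langle Mv,v'\rangle=0$; but this only shows $M^\dagger v'\in V^\perp$, not $M^\dagger v'=0$. In your notation $M^\dagger y=\langle w,y\rangle z$, so $M^\dagger$ vanishes on $V$ if and only if $w\in V^\perp$, and in the degenerate configuration above the hyperplane $\{y\in W\mid \langle w,y\rangle=0\}$ of $W$ is itself degenerate (it contains $w$ in its radical), so no admissible $V$ avoids $w$ and $V^\perp$ is never $M$-invariant. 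In other words, the paper's argument breaks at exactly the configuration you isolated. Your case analysis is valuable precisely because it localizes this difficulty, but as written it neither disposes of the degenerate case nor acknowledges that, by your own structural lemma, that case is incompatible with the asserted conclusion.
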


\begin{proof}
Let $W\subset L^n$ be the $c$-eigenspace space of $M$. Taking $M-c\II_{n,n}$ instead of $M$ and applying Remark \ref{a4} we reduce to the case $c=0$. Hence $Mw=0$ for all $w\in W$. By Lemma \ref{w0.1} there are $f_1,\dots ,f_{n-2}\in W$ such that $\langle f_i,f_i\rangle =1$ for all $i$ and $\langle f_i,f_j\rangle =0
$ for all $i\ne j$. Let $V$ be the linear span of $f_1,\dots ,f_{n-2}$. Set $V^\bot := \{x\in L^n\mid \langle w,x\rangle =0$ for all $w\in W\}$. By the choice of $f_1,\dots ,f_{n-2}$ we have $V\cap V^\bot =\{0\}$. Since $\langle \ ,\ \rangle$ is non-degenerate, we have $\dim (V) +\dim (V^\bot )=n$ and so $L^n = V\oplus V^\bot$ (unitary direct sum). Fix
$w, v\in V$. We have $\langle v,M^\dagger w \rangle= \langle Mv,w\rangle =\langle 0,w\rangle =0$. Since the restriction of $\langle \ ,\ \rangle$ to $V$ is non-degenerate, we get
$M^\dagger w =0$. Hence $M^\dagger w=0$ for all $w\in W$. Fix $m\in V^\bot$  and $v\in W$. We have
$\langle v,Mm\rangle = \langle M^\dagger v,m\rangle =\langle 0,m\rangle =0$. Since this is true for all $v\in W$, we get $Mm \in V^\bot$.
Hence $MV^\bot \subseteq V^\bot$. Set $B:= M_{|V^\bot}$, seen as a map $V^\bot \to V^\bot$. All the eigenvalues of $M$ are in $L$ and we call $d$ the other eigenvalue. The matrix $B$ has eigenvalues $0$ and $d$ with $0$ the eigenspace contains
$u\in W\cap V^\bot$, $u\ne 0$. 

\quad (a) First assume $d\ne 0$ and hence there is $v\in V^\bot$ with $Mv =dv$ and $v \ne 0$. We have $\langle z,z\rangle \in \Delta _n =\Delta$ for all $z\in V^\bot$.
Hence if either $\langle u,u\rangle \ne 0$ or $\langle v,v\rangle \ne 0$, then we apply Proposition \ref{a0.1} and get that we are either in case (1) or case (2).
If $\langle u,u\rangle =\langle v,v\rangle =0$, then we apply Proposition \ref{a3.2}.

\quad (b) Now assume $d=0$. By assumption $Lu$ is the only one-dimensional subspace of $V^\bot$ sent into itself by $B$. If $\langle u,u\rangle =0$, then 
we apply Proposition \ref{i2.1}.
If $\langle u,u\rangle \ne 0$, then we apply Theorem \ref{a3}.\end{proof}

\section{Matrices with coefficients in $K$}\label{Scc}

Take $M =(m_{ij})\in M_{n,n}(K)$. The set $C_n(1,K)$ is the set of all solutions $(x_1,\dots ,x_n)\in K^n$  of the equation
\begin{equation}\label{eqcc1}
x_1^2+\cdots +x_n^2 =1
\end{equation}
Thus $\Nm (M)_K$ is the set of all 
\begin{equation}\label{eqcc2}
\sum _{i,j} m_{ij}x_ix_j
\end{equation}
with $x_1,\dots ,x_n$ satisfying (\ref{eqcc1}).

\begin{lemma}\label{cc4}
Take $M =(m_{ij})$, $N=(n_{ij})\in M_{n,n}(K)$. 
\begin{enumerate}
\item If $m_{ii} =n_{ii}$ for all $i$ and $m_{ij}+m_{ji} = n_{ij}+n_{ji}$ for all $i\ne j$, then $\Nm (M)_K=\Nm (N)_K$.
\item We have $\Nm (B)_K= \Nm (M)_K$ for the matrix $B:= (b_{ij})$ with $b_{ii}=m_{ii}$ for all $i$, $b_{ij}=0$ for all $i<j$ and $b_{ij} = m_{ij}+m_{ji}$ for all $i>j$.
\item If $\mathrm{char}(K) \ne 2$, the matrix $A:= (a_{ij})$ with $a_{ij}=(m_{ij}+m_{ji})/2$ for all $i, j$ is symmetric and $\Nm (A)_K =\Nm (M)_K$.
\end{enumerate}
\end{lemma}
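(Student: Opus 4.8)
The plan is to reduce all three parts to one elementary observation. For $M=(m_{ij})\in M_{n,n}(K)$ and $u=(x_1,\dots ,x_n)\in C_n(1,K)$ the entries $x_i$ lie in $K$, so $\sigma$ fixes them and
\[
\langle u,Mu\rangle \;=\; \sum_{i,j} m_{ij}x_ix_j \;=\; \sum_i m_{ii}x_i^2 \;+\; \sum_{i<j}(m_{ij}+m_{ji})x_ix_j ,
\]
which is formula (\ref{eqcc2}). The point is that $\langle u,Mu\rangle$ depends on $M$ only through the diagonal entries $m_{ii}$ and the off-diagonal sums $m_{ij}+m_{ji}$ for $i\ne j$.

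First I would prove (1): the hypotheses say precisely that $M$ and $N$ have the same diagonal and the same off-diagonal sums, so by the displayed identity $\langle u,Mu\rangle=\langle u,Nu\rangle$ for every $u\in C_n(1,K)$, whence $\Nm(M)_K=\Nm(N)_K$ by the very definition of the $K$-numerical range as the set of values of $u\mapsto\langle u,Mu\rangle$ on $C_n(1,K)$.

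Then (2) and (3) become immediate consequences of (1). For (2) I would check that $B$ has the same diagonal as $M$ (by construction $b_{ii}=m_{ii}$) and that $b_{ij}+b_{ji}=m_{ij}+m_{ji}$ for $i\ne j$: for such a pair exactly one of the index pairs $(i,j),(j,i)$ is ``lower'', so exactly one of $b_{ij},b_{ji}$ equals $m_{ij}+m_{ji}$ while the other is $0$; hence $B$ and $M$ satisfy the hypothesis of (1), giving $\Nm(B)_K=\Nm(M)_K$. For (3), since $\mathrm{char}(K)\ne 2$ the entries $a_{ij}=(m_{ij}+m_{ji})/2$ make sense, $a_{ij}=a_{ji}$ so $A$ is symmetric, $a_{ii}=m_{ii}$, and $a_{ij}+a_{ji}=m_{ij}+m_{ji}$ for $i\ne j$, so once more (1) applies and $\Nm(A)_K=\Nm(M)_K$. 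The whole argument is routine; the only place needing a little attention is the index bookkeeping in (2), i.e. separating the cases $i<j$ and $i>j$ so that the off-diagonal sums line up correctly.
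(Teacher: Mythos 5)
Your argument is correct and is essentially the paper's own proof: both rest on the observation that the value $\langle u,Mu\rangle=\sum_{i,j}m_{ij}x_ix_j$ depends on $M$ only through the diagonal entries and the off-diagonal sums $m_{ij}+m_{ji}$, which gives (1), with (2) and (3) following by a direct check of that hypothesis. You merely spell out the index bookkeeping that the paper leaves implicit.
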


\begin{proof}
The equation (\ref{eqcc2}) is the same for $M$ (i.e. with $m_{ij}$ as coefficients) and for $N$ (i.e. with $n_{ij}$ as coefficients). The last two assertions of Lemma \ref{cc4} follows from the first one.
\end{proof}

\begin{remark}\label{cc2}
For all $c, d\in K$ and all $M\in M_{n,n}(K)$ we have $\Nm (cM+d\II _{n,n})_K= c\Nm (M)_K+d$.
\end{remark}

\begin{remark}\label{cc3}
The vectors $e_1,\dots ,e_n$ prove that for any $M\in M_{n,n}(K)$ the diagonal elements of $M$ are contained in $\Nm (M)_K$.
\end{remark}

\begin{proof}[Proof of Proposition \ref{cc1}:]
Write $M =(m_{ij})$. Taking $M-m_{11}\II _{n,n}$ instead of $M$ we reduce to the case $m_{11}=0$ by Remark \ref{cc2}. If $M$ is antisymmetric and $m_{ii}=0$ for all $i$, then $\Nm (M)_K =\Nm (0\II _{n\times n})_K=\{0\}$ by Lemma \ref{cc4}.

Now assume $\sharp (\Nm (M)_K) =1$. Since the diagonal elements of $M$ are contained in $\Nm (M)_K$ by Remark \ref{cc3},
we have $m_{ii}=0$ for all $i$. Assume $m_{ij}\ne 0$ for some $i\ne j$. The first part of the proof of Proposition \ref{i1} with $e_i,e_j$ instead of $e_1,e_2$ (i.e. the
part with $x,y\in K$)
gives $m_{ji} =-m_{ij}$.
\end{proof}

\begin{remark}\label{cc5}
Assume $\mbox{char}(K) =2$. Then $x_1^2+\cdots +x_n^2 = (x_1+\cdots +x_n)^2$. Hence the elements of (\ref{eqcc2}) coming from the solutions of (\ref{eqcc1})
are the ones coming from the solutions of
\begin{equation}\label{eqcc3}
x_1+\cdots +x_n=1
\end{equation}
Substituting $x_n =1+x_1+\cdots +x_{n-1}$ in (\ref{eqcc2}) we get that $\Nm (M)_K$ is the image of a map $f_M: K^{n-1}\to K$ with $f_M$ a polynomial in $x_1,\dots ,x_{n-1}$
with $\deg (f_M)\le 2$. If $\deg (f_M) =1$, then $f_M$ is surjective, i.e. $\Nm (M)_K =K$. If $\deg (f_M) =0$, then $f_M$ is constant
and hence $\sharp (\Nm (M)_K)=1$. Let $g_M$ be the homogeneous degree $2$ part of $f_M$ and let $A =(a_{ij})$,
$i,j=1,\dots ,n-1$, be the matrix associated to $g_M$ with $a_{ij}= 0$ if $i<j$. We have $a_{ii} = m_{ii} +m_{nn}$ and $a_{ij} = m_{ij}+m_{ji}$ for all $i\ne j$ with $i,j<n$. Since
$\mathrm{char}(K) =2$, we have $a_{ii}=0$ if and only if $m_{ii}=m_{nn}$ and $a_{ij}=0$ (with $i\ne j$) if and only if $m_{ij}=m_{ji}$. Thus $g_M =0$
if and only if all diagonal elements of $M$ are the same and the top $(n-1)\times (n-1)$ principal submatrix of $M$ is symmetric.

\quad (a) Assume $g_M\ne 0$ and that $K$ is infinite. 

\quad {\emph {Claim 1:}} If $a_{ij}\ne 0$ for some $i\ne j$, then $\Nm (M)_K =K$.

\quad {\emph{Proof of Claim 1:}} Up to a permutation of $e_1,\dots ,e_{n-1}$ we may assume $a_{12} \ne 0$. 
We have $f_M(x_1,x_2,0,\dots ,0) = (m_{11} +m_{nn})x_1^2 +a_{12}x_1x_2 + (m_{22}+m_{nn})x_2^2+\beta x_1+\gamma x_2+\delta$ for some
$\beta, \gamma ,\delta \in K$. Hence it is sufficient to prove that the image of the map $\psi : K^2\to K$ induced by the polynomial $f_M(x_1,x_2,0,\dots ,0)$
has the cardinality of $K$. We will prove that $\psi$ is surjective. Take $b\in K$ such that $a_{11}b \ne -\gamma$. The polynomial $f_M(b,x_2,0\dots ,0)$
is a non-constant degree $1$ polynomial and hence it induces a surjection $K\to K$. Thus $\psi$ is surjective.
Now assume $a_{ii}\ne 0$ for some $i$, i.e. $m_{ii}\ne m_{nn}$ for some $i<n$.

\quad{\emph{Claim 2:}} Assume $a_{ij}= 0$ for all $i\ne j$, but $a_{ii} \ne 0$ for some $i<n-1$. If $K$ is infinite, then $\Nm (M)_K$ has the cardinality of $K$.

\quad {\emph{Proof of Claim 2:}} We have $g_M(x_1,\dots ,x_{n-1}) =\sum _{i=1}^{n-1} a_{ii}x_i^2 \ne 0$ and
$$f_M(x_1,\dots ,x_{n-1}) = g_M(x_1,\dots ,x_{n-1})+\ell (x_1,\dots ,x_{n-1}) + \gamma$$ for some $\gamma \in K$ and a linear form $\ell \in K[x_1,\dots ,x_{n-1}]$. Up to a permutation of the indices we may assume that
$a_{11}\ne 0$. Fix any $(b_2,\dots ,b_{n-1}) \in K^{n-2}$ and call $\phi : K\to K$ the map induced by $f_M(x_1,b_2,\dots ,b_{n-1})$. Since $f_M(x_1,b_2,\dots ,b_{n-1})$ is a degree $2$ non-constant polynomial, each fiber of $\phi$ has at most cardinality $2$. Hence $\phi (K)$ and $K$ have the same cardinality.

\quad (b) Assume $g_M \equiv 0$. In particular $m _{ii}=m_{nn}$ for all $i<n$. We have $f_M(0,\dots ,0) =a_{nn}$ and $f_M(x_1,\dots ,x_m) =b_1x_1+\cdots +b_{n-1}x_{n-1}
+a_{nn}$ with $b_i = m_{ni} +m_{in}$. Hence $f_M$ is surjective if and only if $m_{ni}\ne m_{in}$ for some $i<n$, while $f_M$ is constant if $m_{ni}=m_{in}$ for all $i<n$.
\end{remark}

\begin{proof}[Proof of Proposition \ref{cc6}:]
The proposition was proved in Remark \ref{cc5}, with as a bonus the discussion of some cases with $\Nm (M)_K=K$.\end{proof}

\providecommand{\bysame}{\leavevmode\hbox to3em{\hrulefill}\thinspace}

\end{document}